%% file: main.tex
\documentclass[9pt]{article}
\usepackage[left=3.8cm, right=3.8cm, top=3.8cm, bottom=2.25cm]{geometry}
\usepackage{amsmath,color}
\usepackage{amsfonts,amsthm}
\usepackage{amssymb,enumerate,enumitem,verbatim}
\usepackage[pdftex]{graphicx}
\usepackage[export]{adjustbox}
\usepackage{amsmath,setspace,scalefnt}
\usepackage[usenames,dvipsnames,svgnames,table]{xcolor}
\usepackage{amsfonts,amsthm}
\usepackage{amssymb,enumitem,verbatim}
\usepackage{graphicx}
\usepackage{tikz,caption,subcaption,thm-restate}
\usetikzlibrary{calc,decorations.pathmorphing}
\usepackage[titles]{tocloft}
\usepackage[symbol]{footmisc}
\usepackage{hyperref}
\usepackage{appendix}

\input{guff}

\newcommand{\claimproofstart}[1][Proof]{\begin{proof}[#1]
\renewcommand{\qedsymbol}{$\boxdot$}}
\newcommand\claimproofend{
\end{proof}
\renewcommand{\qedsymbol}{$\square$}}

\hypersetup{hidelinks}

\newcommand{\Alabel}[1]{%
  \textbf{A#1%
    \ifnum#1=1 *%
    \else\ifnum#1=7 *%
    \fi\fi
  }%
}

\makeatletter
\newcommand{\astar}[1]{%
  \expandafter\@astar\csname c@#1\endcsname
}
\newcommand{\@astar}[1]{%
  \@arabic#1%
  \ifnum#1=1 *%
  \else\ifnum#1=7 *%
  \fi\fi
}
\AddEnumerateCounter{\astar}{\@astar}{7*}
\makeatother

\title{Hamiltonicity of mildly pseudorandom regular graphs}
\author{Alp M\"uyesser\thanks{Mathematical Institute, University of Oxford. \textrm{email: alp.muyesser@maths.ox.ac.uk} }}

\begin{document}

\maketitle

\begin{abstract}
    We show that if an $(n,d,\lambda)$-graph satisfies $\lambda\leq (1-\delta)d$ and $d\gg  \delta^{-6}(\log n)^{3}$ for some $\delta>0$, then it is Hamiltonian. A qualitatively similar result was recently proven by Brada\v{c} and Janzer. Our proof here is shorter and gives better quantitative bounds.
\par  In our proof, as in earlier work of Ferber, Han, Mao, and Vershynin, we use a random matrix inequality to show that a mild spectral gap is typically preserved after randomly sampling an appropriate proportion of the vertices. This allows us to deduce that typical balanced bipartite subgraphs of pseudorandom graphs contain perfect matchings. To convert a collection of perfect matchings into a Hamilton cycle, we use a variant of the sorting network method.
\end{abstract}

\section{Introduction}
\par An $(n,d,\lambda)$-graph is an $n$-vertex $d$-regular graph where all non-trivial eigenvalues of the adjacency matrix have absolute value at most $\lambda$. Roughly speaking, the smaller $\lambda$ is compared to $d$, the more pseudorandom the underlying graph is and the better it expands. For example, it is well-known that $(n,d,\lambda)$-graphs have their edge expansion controlled by $d-\lambda$, see for example \cite[Theorem 2.4]{HooryLinialWigderson2006}.
\par An important milestone in the study of Hamiltonicity is the recent resolution of the Krivelevich--Sudakov conjecture \cite{krivelevich2003sparse} by Dragani\'c, Montgomery, Correia, Pokrovskiy, and Sudakov \cite{draganic2024hamiltonicity}. This conjecture, now a theorem, asserts that $(n,d,\lambda)$-graphs are Hamiltonian when $\lambda\leq d/C$, where $C$ is a sufficiently large constant. Although this is a widely useful criterion for Hamiltonicity, and covers many interesting cases such as random Cayley graphs with $C\log n$ generators, it is not powerful enough to deduce Hamiltonicity of various other graph families, such as Kneser graphs~\cite{Merino2025Kneser}, or recover the classical fact that hypercubes contain Hamilton cycles. 

\par The motivation of the present paper is to investigate to what extent the Krivelevich--Sudakov conjecture can be strengthened. In particular, we would like to determine the weakest possible assumption on an $(n,d,\lambda)$-graph that forces it to be Hamiltonian. Of course, any Hamiltonian graph has a near perfect matching (meaning a perfect matching that misses at most one vertex) and is connected. Krivelevich and Sudakov showed that $\lambda\leq d-2$ is sufficient to guarantee near perfect matchings \cite[Theorem 4.3]{krivelevich2006pseudo} and it is a standard exercise to check that $\lambda<d$ ensures connectivity. These two properties by themselves are not enough to establish Hamiltonicity; however, a widespread phenomenon in graph theory is that any non-trivial condition that implies that a graph has a near perfect matching and is connected also implies that the graph is Hamiltonian. 
\par The preceding discussion raises the natural question of whether all $(n,d,\lambda)$-graphs with $\lambda\leq d-2$ are Hamiltonian. The author is not aware of any counterexamples to such a conjecture, despite extensive search with modern AI tools. This should of course be taken with a grain of salt, as it remains very difficult to argue, in general, that a given graph is not Hamiltonian.
\par A more plausible conjecture is that a wider separation between $d$ and $\lambda$ should be sufficient for Hamiltonicity. 

\begin{conjecture}[Folklore]\label{conj:main} For any $\delta\in(0,1)$, there exists $d_0$ so that for every $d\geq d_0$, every $(n,d,\lambda)$-graph with $\lambda\leq (1-\delta)d$ is Hamiltonian.
\end{conjecture}
The author heard the above conjecture formulated by Ferber and stronger forms of it were given in print by Brada\v{c} and Janzer \cite{bradavc2026hamiltonicity} and by Montgomery~\cite{montgomery2026recent}. The author also heard the conjecture formulated as a question in various seminars, for example by Buci\'c and by Gowers. 
\par Brada\v{c} and Janzer made significant progress towards Conjecture~\ref{conj:main}, essentially resolving a strong version of it whenever the host graph has polylogarithmic degree. In particular, they showed the following.

\begin{theorem}[Brada\v{c}--Janzer]
    Let $G$ be an $(n,d,\lambda)$-graph, and suppose that $n$ is sufficiently large. If $\lambda\leq(1-\delta)d$
and $d\ge (\delta^{-1}\log n)^{10^9}$, then $G$ is Hamiltonian.
\end{theorem}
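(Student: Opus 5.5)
Since this is the Bradač--Janzer theorem, and the abstract outlines a strategy that gives a stronger bound ($d\gg\delta^{-6}(\log n)^3$), I plan to prove the statement by following that strategy. The weaker hypothesis $d\ge(\delta^{-1}\log n)^{10^9}$ then follows with plenty of room.

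\textbf{Step 1: spectral gap survives random sampling.} Let $U\subseteq V(G)$ be a random subset in which each vertex is kept independently with probability $p$, where $p\ge C\delta^{-2}\log n/d$. Write $A=J\cdot d/n+E$ with $\|E\|\le\lambda$ on the nontrivial eigenspace. Apply a matrix Bernstein (or Rudelson-type) inequality to the principal submatrix $A[U]$, viewed as a sum of independent random pieces. The goal is to show that, with high probability, $G[U]$ has all degrees $(1\pm\delta/10)pd$ and its nontrivial spectrum is bounded by $(1-\delta/2)pd$. This is the approach of Ferber--Han--Mao--Vershynin. Since the error term in the matrix inequality scales like $\sqrt{pd\log n}$, we need $pd\gg\delta^{-2}\log n$.

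\textbf{Step 2: perfect matchings in random balanced bipartite pieces.} Take two disjoint random sets $X,Y$ of equal size. Use the mild spectral gap from Step 1, together with the expander mixing lemma, to verify Hall's condition in $G[X,Y]$. Small sets $S$ expand because the degree is large and $\lambda<(1-\delta)d$ prevents sets from being nearly closed. Large sets are handled by symmetry: apply the small-set argument to the complement of $N(S)$ on the other side. The middle range is covered by edge expansion, which is controlled by $d-\lambda\ge\delta d$. A small correction absorbs the discrepancies between $|X|$ and $|Y|$ and the parity issues.

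\textbf{Step 3: from matchings to a Hamilton cycle.} Partition $V(G)$ into about $k=O(\delta^{-2}\log n)$ random parts of size roughly $n/k$. By Steps 1--2 and a union bound, every pair of parts, and indeed every sampled configuration we need, admits a perfect matching. Following the sorting network method, first link each part into a path system using a chain of perfect matchings. Then apply a sorting network of depth $O(\log n)$ made of switchable gadgets; each gadget uses perfect matchings between designated random sets. This lets us permute path endpoints so that the resulting $2$-factor becomes a single cycle. Making every gadget usable requires $O(\log n)$ rounds of sampling, and each round needs $pd\gtrsim\delta^{-2}\log n$. This is what produces a polylogarithmic requirement on $d$, which $(\delta^{-1}\log n)^{10^9}$ comfortably exceeds.

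\textbf{Main obstacle.} I expect the main difficulty to be Step 3. The issue is designing the sorting network gadgets so that every vertex is covered exactly once while each required bipartite piece remains a uniformly random balanced pair of sets, so that Step 2 applies through the union bound. I would first try the absorber-free variant: use a butterfly network whose switches are realised by two alternative perfect matchings between the same random pair of sets. The key point to check is that both alternatives keep a union of paths.
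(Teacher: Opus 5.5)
\textbf{The gap is Step~3.} This is the combinatorial core of the proof, your proposal leaves it open, and the mechanism you suggest for it cannot work. A switch has to act on a \emph{prescribed} pair of paths. If it is realised purely by edges between two consecutive layers, then at each switch, with endpoints $a,a'$ on one side and $b,b'$ on the other, you need all four edges $ab,a'b',ab',a'b$, i.e.\ a $4$-cycle at every switch position. Two perfect matchings of $G[X,Y]$ give you no control over which endpoints get paired: their symmetric difference is a union of alternating cycles in uncontrolled positions. Moreover, $G$ may contain no $4$-cycle at all, since $(n,d,\lambda)$-graphs with $\lambda\le(1-\delta)d$ can have girth $\Omega(\log n/\log d)$. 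So comparators must be genuine gadgets. Each must contain a cycle: in a forest, the two linkages $\{a_1b_2,a_2b_1\}$ would both pass through the path joining the disjoint paths $a_1b_1$ and $a_2b_2$. Hence each gadget needs at least girth-many vertices. With your $O(\delta^{-2}\log n)$ parts you must route $N\approx\delta^2n/\log n$ paths through $\Theta(N\log N)$ such gadgets. For constant $\delta$ and polylogarithmic $d$ this needs more than $n$ vertices. Finally, nothing in your plan covers exactly the vertices left unused by the gadgets.

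\textbf{What the paper does instead.} It proves Theorem~\ref{thm: main theorem}, which implies this statement for large $n$.
\begin{itemize}
\item \emph{Few paths.} It uses layers of size only $k=\Theta(\delta^4n/\log^2n)$, so the chain of perfect matchings produces just $k$ paths from $B$ to $A$.
\item \emph{Router, not sorting network.} These $k$ paths are closed up by a basic Hamilton router (Definition~\ref{defn:basic hamilton routers}, Proposition~\ref{prop:routers are good}). It needs fewer than $k$ comparators, not $\Theta(k\log k)$.
\item \emph{Comparators.} The router is embedded in a random reservoir $R$ of size $c_0\delta n$ (Lemma~\ref{lem:random-router}). Each comparator is an even cycle of length $O(\log n)$, found via a max cut and the Moore bound (Lemma~\ref{lem:cycles}), with pairs of its vertices joined by disjoint paths (Lemma~\ref{lemma: comparators exist}).
\item \emph{Connections.} All these paths, and the links between comparators, are found simultaneously by the connecting lemma (Lemma~\ref{lem:cut-dense-connector}). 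It is proved with Haxell's theorem inside a cut-dense random bipartite piece of $R$.
\item \emph{Exact cover.} Leftover reservoir vertices are redistributed into the layers as small random sets. Lemma~\ref{lem:perturbed-perfect-matching} (Hall's condition with slack coming from cut-density) still yields perfect matchings between consecutive perturbed layers. This gives the path forest for Proposition~\ref{prop:basic}.
\end{itemize}

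\textbf{A secondary issue in Steps~1--2.} Step~2 needs the one-sided bound $s_2(A[X,Y])\le(1-\delta/2)D$ for a random disjoint pair $(X,Y)$. Via Lemma~\ref{lemma: spectral cut density}, this gives cut-density of $G[X,Y]$. A spectral bound on a principal submatrix $A[U]$ does not give it: $G[X\cup Y]$ can expand well while $G[X,Y]$ is disconnected. Also, $A[U]$ is quadratic in the selection indicators, so it is not a sum of independent matrices. The usual decoupling fix loses a constant factor in the main term $p\lambda$, which is fatal when $\lambda=(1-\delta)d$. The paper instead samples rows and then columns, applying Lemma~\ref{lem:columns} twice (Lemma~\ref{lem:main}).
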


Our contribution in this paper is a shorter proof of the above that comes with better quantitative estimates.

\begin{theorem}\label{thm: main theorem}
Let $G$ be an $(n,d,\lambda)$-graph. If $\lambda\leq(1-\delta)d$
and $d\ge C  \cdot \delta^{-6}(\log n)^{3}$, where $\delta\in(0,1)$ and $C$ is a sufficiently large absolute constant, then $G$ is Hamiltonian.
\end{theorem}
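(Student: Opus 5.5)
We follow the strategy outlined in the abstract: first show that random vertex subsets inherit a mild spectral gap, then deduce that typical balanced bipartite subgraphs have perfect matchings, and finally glue many such matchings into a Hamilton cycle with a sorting-network-type absorber.

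\textbf{Step 1: spectral gap survives random sampling.} Let $A$ be the adjacency matrix of $G$ and let $S\subseteq V(G)$ contain each vertex independently with probability $p$, where $p\approx (\log n)^{2}\delta^{-c}/d$ or similar. Write $D_S$ for the diagonal indicator of $S$. The matrix $D_SAD_S-p^2A$ is a sum of independent random matrices indexed by vertices, and we bound its operator norm using a matrix Bernstein or Rudelson/Tropp-type inequality, in the style of Ferber--Han--Mao--Vershynin. This should give $\|D_SAD_S-p^2A\|\lesssim p\sqrt{d\log n}+\log n$ with high probability. Consequently $G[S]$ is nearly $pd$-regular, its nontrivial spectrum is at most $(1-\delta)pd+O(\sqrt{pd\log n})$, and this stays at most $(1-\delta/2)pd$ provided $pd\gg \delta^{-2}\log n$. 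The same argument applies to bipartite pairs $(X,Y)$ of disjoint random sets, through the off-diagonal block $D_XAD_Y$.

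\textbf{Step 2: perfect matchings in random balanced bipartite pieces.} For random disjoint $X,Y$ of equal size, and also after adding a small number of prescribed vertices, we verify Hall's condition. For small sets we use vertex expansion, coming from near-regularity together with the expander mixing lemma. For large sets ($|U|\ge |X|/2$) we use the edge-expansion consequence of $\lambda_2\le(1-\delta/2)pd$: any $U\subseteq X$ and $W\subseteq Y$ with $|W|<|U|$ and no $U$–$(Y\setminus W)$ edges would contradict the spectral bound on $e(U,Y\setminus W)$. Almost-regularity is where the mild gap $\delta$ suffices, since we never need $\lambda\ll d$. We also need a robust version that remains true after deleting or adding a few vertices with small degree loss.

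\textbf{Step 3: assembling a Hamilton cycle.} Partition $V(G)$ randomly into $k\approx$ polylogarithmically many parts $V_1,\dots,V_k$, each of which behaves as in Steps 1 and 2. Using the matchings between consecutive parts, we get a spanning family of paths that is a union of perfect matchings $M_i$ between $V_i$ and $V_{i+1}$. Composing these matchings defines a permutation $\pi$ on $V_1$, and the goal is to change the matchings so that $\pi$ becomes a single $|V_1|$-cycle. This is the sorting-network idea. We reserve some parts as switchers, each realising a transposition or a layer of disjoint transpositions on path endpoints; such a gadget exists because random bipartite pieces have perfect matchings even after prescribing a few endpoint pairs. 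With $O(\log n)$ depth of a sorting/permutation network (e.g.\ Beneš-type), any target permutation can be realised. The degree budget is then $O(\log n)$ layers, each needing $pd\gtrsim\delta^{-2}\log n$ and some further $\delta$ losses, which gives the target $d\gtrsim\delta^{-6}(\log n)^3$.

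\textbf{Main obstacle.} The hardest part is Step 3: building switching gadgets that realise prescribed transpositions using only matchings in random bipartite subgraphs, while keeping all pieces independent enough for the concentration in Step 1 to apply simultaneously via a union bound. I would first design a gadget where two paths can be cross-connected or parallel-connected according to which of two perfect matchings of a random bipartite piece is used. This requires Step 2 in a rooted form, where perfect matchings exist with a bounded number of forced edges.
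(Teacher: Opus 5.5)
\textbf{Overall.} Your Steps 1 and 2 aim at the right targets; they correspond to Lemmas~\ref{lem:main}, \ref{lemma: spectral cut density} and~\ref{lem:perturbed-perfect-matching}. Step 3, however, is where essentially all the remaining work lies, and there the proposal has a genuine gap: you never construct a switch, and the mechanism you suggest cannot work.

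\textbf{Why the switch fails.} A two-path switch between consecutive layers routes $a,b$ to $a',b'$ either straight or crossed. It therefore needs all four edges $aa',bb',ab',ba'$, which form a $4$-cycle. The hypotheses allow $G$ to have large girth, so no such $4$-cycle need exist. More generally, two perfect matchings between consecutive layers differ along alternating cycles of length at least the girth. Flipping one composes the routing with a cyclic shift of many paths, never with a transposition.

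\textbf{What is missing.} You need a gadget whose two states each partition its \emph{own} vertex set into two paths with swapped endpoints. The paper builds such a comparator (Lemma~\ref{lemma: comparators exist}) as follows.
\begin{enumerate}
\item Find a short even cycle via the Moore bound in a max-cut bipartite subgraph (Lemma~\ref{lem:cycles}).
\item Designate four cycle vertices as terminals, and pair the rest by $P$ so that $P$ together with either perfect matching of the cycle is acyclic.
\item Join each pair by a path.
\end{enumerate}
Realising all these pairs, together with the wiring between comparators, by vertex-disjoint short paths requires a connecting lemma (Lemma~\ref{lem:cut-dense-connector}). It is proved with Haxell's hypergraph matching theorem inside a random reservoir whose cut-density again comes from Lemma~\ref{lem:main}. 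Unused reservoir vertices are later spread over the layers and absorbed by the perturbed Hall argument. None of these ingredients appears in your sketch.

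\textbf{Quantitative bound.} This part is also unsupported: your own count ($O(\log n)$ layers with $pd\gtrsim\delta^{-2}\log n$) does not produce the exponent $3$. The comparators available here consist of an even cycle of length $\Theta(\log n)$ plus connecting paths of length $\Theta(\delta^{-1}\log n)$. With these, a Bene\v{s}-type network of depth $\Theta(\log n)$ needs $\Theta(N\log N)$ comparators for $N$ paths. That costs an extra $\log n$ factor and pushes the requirement to about $d\gtrsim(\log n)^4$. The paper avoids this because it only needs the closed-up permutation to be a single cycle, not an arbitrary permutation. So the basic Hamilton router suffices: two rows of adjacent comparators, fewer than $k$ in total. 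This gives $k\approx\delta^4n/(\log n)^2$ and $\Theta(\delta^{-4}(\log n)^2)$ layers, and hence $d\gtrsim\delta^{-6}(\log n)^3$.

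\textbf{Step 1 inequality.} The inequality in Step 1 is false as stated. Testing $D_SAD_S-p^2A$ on $\mathbf 1_S/\sqrt{|S|}$ gives about $pd$. Also, $D_SAD_S$ is a quadratic chaos in the selectors, not a sum of independent matrices. The correct statement concerns the centred matrix $B=A-\frac dnJ_n$, namely $\|B[U,V]\|\le p\|B\|(1+O(p))+O(\sqrt{pd\log n})$. The leading constant must be exactly $p$, since any constant-factor loss would destroy the mild gap. The paper proves this by applying matrix Bernstein to Gram matrices twice, once for rows and once for columns.

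\textbf{Step 2.} Under a mild gap, the expander mixing lemma yields Hall's condition only for $|U|\lesssim\delta m$. The Cheeger-type cut-density bound, applied to $S\cup(R_0\setminus T)$, handles all sizes at once.
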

We also record a combinatorial version of the above result that can be obtained by applying Cheeger's inequality (see, for example,
\cite[Lemmas~3.3 and~3.6]{bradavc2026hamiltonicity}). We say that a graph $G$ is $\varepsilon$-far from bipartite if at least
$\varepsilon e(G)$ edges must be deleted from $G$ in order to make it
bipartite.

\begin{corollary}
There is an absolute constant $C>0$ such that the following holds. Let $G$ be an $n$-vertex $d$-regular $\gamma$-expander, in the sense that
$
e_G(S,V(G)\setminus S)\ge \gamma d|S|
$
for every $S\subseteq V(G)$ with $1\le |S|\le 2n/3$. Suppose moreover that $G$ is $\varepsilon$-far from bipartite. If $n$ is sufficiently large and
$
d\ge C\varepsilon^{-12}\gamma^{-12}(\log n)^3,
$
then $G$ is Hamiltonian.
\end{corollary}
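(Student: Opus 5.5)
The plan is to deduce the corollary from Theorem~\ref{thm: main theorem} by checking that its hypotheses imply a spectral gap of the form $\lambda\le(1-\delta)d$ with $\delta$ polynomial in $\varepsilon\gamma$. Let $G$ have adjacency eigenvalues $d=\lambda_1\ge\lambda_2\ge\dots\ge\lambda_n$. We must bound both $\lambda_2$ from above and $\lambda_n$ from below. The target is $\lambda=\max(\lambda_2,|\lambda_n|)\le(1-c\,\varepsilon^2\gamma^2)d$ for an absolute constant $c>0$. With $\delta=c\varepsilon^2\gamma^2$, the degree condition $d\ge C\delta^{-6}(\log n)^3$ of Theorem~\ref{thm: main theorem} becomes $d\ge C'\varepsilon^{-12}\gamma^{-12}(\log n)^3$, which explains the exponents in the statement.

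\emph{Upper tail (Cheeger).} The edge expansion hypothesis gives $e(S,S^c)\ge\gamma d|S|$ for $|S|\le 2n/3$, and in particular for all $|S|\le n/2$. So the normalized Cheeger constant is at least $\gamma$. The easy direction of Cheeger's inequality, applied in the hard direction $h^2/2\le d-\lambda_2$ (normalized), gives $d-\lambda_2\ge\gamma^2 d/2$. Hence $\lambda_2\le(1-\gamma^2/2)d$. This is standard, cf.\ \cite[Lemma~3.3]{bradavc2026hamiltonicity}.

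\emph{Lower tail (bipartite Cheeger).} This is the main step. The plan is to use the bipartiteness ratio of Trevisan, as in \cite[Lemma~3.6]{bradavc2026hamiltonicity}. Trevisan's inequality states that $d+\lambda_n\ge \beta^2 d/2$. Here $\beta$ is the minimum, over nonzero $y\in\{-1,0,1\}^V$, of $\sum_{uv\in E}|y_u+y_v|/(d\sum_u|y_u|)$. So it suffices to show $\beta\ge c'\varepsilon\gamma$.

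Fix $y$, and set $L=\{y=-1\}$, $R=\{y=1\}$, $Z=\{y=0\}$, and $S=L\cup R$. The numerator counts, up to constant factors, three kinds of edges: edges inside $L$, edges inside $R$, and edges from $S$ to $Z$. We split into two cases.
\begin{itemize}
\item If $|S|\le 2n/3$, expansion gives at least $\gamma d|S|$ edges from $S$ to $Z$. Each such edge contributes $1$, so $\beta\ge\gamma$ for this $y$.
\item If $|S|>2n/3$, then $|Z|<n/3$, and expansion applied to $Z$ gives $e(S,Z)\ge\gamma d|Z|$. We combine this with $\varepsilon$-farness, applied to the bipartition $(L,R\cup Z)$. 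Farness yields $e(L)+e(R\cup Z)\ge\varepsilon dn/2$. Moreover $e(R\cup Z)\le e(R)+e(R,Z)+e(Z)$ and $e(Z)\le d|Z|/2$. Thus either $e(L)+e(R)+e(S,Z)\ge\varepsilon dn/8$, or $|Z|\gtrsim\varepsilon n$. In the latter case, $e(S,Z)\ge\gamma d|Z|\gtrsim\varepsilon\gamma dn$. Either way the numerator is $\gtrsim\varepsilon\gamma dn\ge\varepsilon\gamma d|S|$.
\end{itemize}
In both cases $\beta\gtrsim\varepsilon\gamma$, so $\lambda_n\ge-(1-c\varepsilon^2\gamma^2)d$.

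Combining the two bounds gives $\lambda\le(1-\delta)d$ with $\delta=c\min(\gamma^2,\varepsilon^2\gamma^2)=c\varepsilon^2\gamma^2$, using $\varepsilon\le1$. Theorem~\ref{thm: main theorem} then applies. The only delicate point is the lower-tail case analysis, namely making sure the $Z$-heavy case is handled by expansion rather than by farness.
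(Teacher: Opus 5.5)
Your proposal is correct and matches the route the paper indicates: the paper only remarks that the corollary follows from Cheeger-type inequalities as in \cite[Lemmas~3.3 and~3.6]{bradavc2026hamiltonicity}, namely Cheeger's inequality for $\lambda_2$ and a bipartite (Trevisan) Cheeger bound for $\lambda_n$, giving $\delta=\Theta(\varepsilon^2\gamma^2)$ and hence the exponent $12=2\cdot 6$ via Theorem~\ref{thm: main theorem}; your case analysis of the bipartiteness ratio fills in that step correctly. One wording slip: $d-\lambda_2\ge\gamma^2 d/2$ is the hard direction of Cheeger's inequality, not the easy one.
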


An analogous result can be obtained for bipartite graphs with one-sided spectral gap as well, as was done in \cite{bradavc2026hamiltonicity}, but we do not pursue this here.

\vspace{2mm}

\noindent \textbf{Remarks on the proof, acknowledgements, and AI declaration. }  An important ingredient in the proof is Lemma~\ref{lem:main}, which asserts, in a suitable weak sense, that random vertex subgraphs of $(n,d,\lambda)$-graphs retain pseudorandomness. Its proof uses only standard matrix concentration estimates, and the underlying phenomenon is not new. A substantially more sophisticated version was proved by Chen, Khanna, and Li~\cite{ChenKhannaLi22}, and by Assadi, Kapralov, and Yu~\cite{AssadiKapralovYu2023}, although with worse quantitative bounds.

The author is also indebted to the work of Ferber, Han, Mao, and Vershynin \cite{ferber2025hamiltonicity}, which first introduced him to these ideas. Their work proves a related sampling statement that yields partial progress towards the Krivelevich--Sudakov conjecture. The argument there has a constant factor loss as an artifact of the proof, which prevents a direct application to the mildly pseudorandom graphs considered in the present paper. As the author learned from Ferber, an unpublished variant of their argument avoids this loss at the expense of obtaining only a one-sided spectral estimate.

AI tools were used to help identify a short proof of the particular one-sided statement formulated here as Lemma~\ref{lem:main}. The underlying result is nevertheless attributed to Ferber, Han, Mao, and Vershynin. The formulation used in the present paper is weaker than what their methods appear capable of proving, but has the advantage of admitting a particularly short proof, consisting essentially of two applications of the matrix Bernstein inequality.
\par The remaining ingredients of the proof build on the work of other researchers. In particular, we build on the sorting network method, which goes back to work of H\"aggkvist and Thomasson \cite{HaggkvistThomason1995,HaggkvistThomason1997}. We use a version of this method inspired by our prior work with Hyde, Morrison, and Pavez--Sign\'e~\cite{hyde2025spanning}, and in particular, we use a simplification later discovered in our work with Bowtell, Montgomery, and Pokrovskiy \cite{bowtell2026proof} that uses Hamilton routers. Other important ingredients in the proof include Hall's marriage theorem and a very useful hypergraph generalisation thereof, due to Haxell \cite{Haxell1995}, which we use to find connections between prescribed pairs of vertices in expander graphs, as is standard in the area. The contribution of the present paper is assembling these tools in a convenient fashion.

\par The material is partially based upon work supported by the National Science Foundation under Grant No.~DMS-1928930, while the author was in residence at the Simons Laufer Mathematical Sciences Institute in Berkeley, California, during the semester of Spring 2025.

\section{Spectral results, subsampling expanders}

In this section, we record two important spectral facts. The first, Lemma~\ref{lemma: spectral cut density}, is quite standard, and says that spectral gap implies a combinatorial expansion condition. The second,  Lemma~\ref{lem:main}, states that randomly sampling rows and columns of a biadjacency matrix does not change the operator norm significantly, and follows by basic matrix inequalities, but appears to be not so well-known in extremal combinatorics. As stated in the acknowledgements, versions of such a statement were proved before in \cite{AssadiKapralovYu2023, ferber2025hamiltonicity, ChenKhannaLi22}. The proof we use here, a routine application of the Matrix Bernstein inequality, was suggested to us by an AI model.

We first record the following standard consequence of 
\cite[Corollary~4.7.4 and Theorem~2.8.1(iii)]{BrouwerHaemers2012} which allows us to translate spectral bounds into a combinatorial expansion property. We will not need it until the proof of Theorem~\ref{thm: main theorem}.

\vspace{2mm}
\noindent \textbf{Definition.} A graph $G$ is \textit{$p$-cut-dense} if
$
e_G(S,V(G)\setminus S)
\ge p|S||V(G)\setminus S|$ for every $S\subseteq V(G)$.

\begin{lemma}\label{lemma: spectral cut density}
    Let $G$ be a graph on $n\ge2$ vertices, and let
$\lambda_2(G)$ denote the second largest eigenvalue
of its adjacency matrix. If $\delta(G)>\lambda_2(G)$, then $G$ is
$
\frac{\delta(G) - \lambda_2(G)}{n}\text{-cut-dense}.
$
\end{lemma}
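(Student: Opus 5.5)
We will prove the statement through the Laplacian $L = D - A$, where $D$ is the diagonal degree matrix and $A$ is the adjacency matrix of $G$. The approach is a Rayleigh-quotient argument with a test vector tailored to the cut. The two cited facts from Brouwer--Haemers supply exactly the two ingredients we need: an eigenvalue bound relating $L$ to $A$, and the variational characterisation of the second Laplacian eigenvalue.

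\textbf{Step 1: spectral gap of the Laplacian.} Write $0=\mu_1\le \mu_2\le\dots\le\mu_n$ for the Laplacian eigenvalues. We first show $\mu_2 \ge \delta(G)-\lambda_2(G)$. Since $D \succeq \delta(G) I$, we have $L \succeq \delta(G)I - A$. Let $U$ be the span of the eigenvectors of $A$ for $\lambda_2,\dots,\lambda_n$; it has dimension $n-1$. For every unit $x\in U$,
\[
x^\top L x \ge \delta(G) - x^\top A x \ge \delta(G)-\lambda_2(G).
\]
By Courant--Fischer (max--min over $(n-1)$-dimensional subspaces), this gives $\mu_2\ge \delta(G)-\lambda_2(G)$. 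The argument does not use the Perron vector, so it also works for irregular $G$. This is the only step requiring care. The obvious shortcut, comparing $\mu_2$ against $d-\lambda_2$, is valid only for regular graphs, and the subspace argument above is what handles the general case.

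\textbf{Step 2: test vector for a cut.} Fix a set $S$ with $\emptyset\ne S\subsetneq V(G)$; the cases $S=\emptyset$ and $S=V(G)$ are trivial. Put $s=|S|$, and let $x=(n-s)\mathbf 1_S - s\mathbf 1_{V\setminus S}$, which is orthogonal to $\mathbf 1$. We will use two facts:
\[
x^\top L x=\sum_{uv\in E}(x_u-x_v)^2 = n^2 e_G(S,V\setminus S),
\]
\[
\|x\|^2 = s(n-s)^2+(n-s)s^2 = ns(n-s).
\]

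\textbf{Step 3: combine.} Since $x\perp \mathbf 1$ and $\mathbf 1$ spans the kernel direction of $\mu_1$, the variational characterisation gives $x^\top Lx \ge \mu_2\|x\|^2$. Substituting the two computations from Step 2 yields
\[
n^2 e_G(S,V\setminus S)\ge (\delta(G)-\lambda_2(G))\, n s(n-s).
\]
Dividing by $n^2$ gives
\[
e_G(S,V\setminus S)\ge \frac{\delta(G)-\lambda_2(G)}{n}\,|S||V\setminus S|,
\]
which is exactly the claimed $\frac{\delta(G)-\lambda_2(G)}{n}$-cut-density. The hypothesis $\delta(G)>\lambda_2(G)$ ensures that this constant is positive.
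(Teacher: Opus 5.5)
Your proof is correct and follows the same route the paper takes by citation to Brouwer--Haemers. Both arguments use a Courant--Weyl comparison $L\succeq \delta(G)I-A$ to get $\mu_2(L)\ge\delta(G)-\lambda_2(G)$, and then the standard Laplacian cut bound $e_G(S,V\setminus S)\ge \mu_2|S||V\setminus S|/n$, which you prove via the test vector $(n-s)\mathbf 1_S-s\mathbf 1_{V\setminus S}$. The paper does not prove these two facts itself.
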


Given the previous lemma, to retain combinatorial expansion after random subsampling in a $(n,d,\lambda)$-graph, we need a way to deduce information about the spectrum of the randomly sampled subgraph. The following is the lemma that allows us to achieve this. The statement is essentially due to Ferber, Han, Mao, and Vershynin, as explained in the introduction, however, here, we give a short proof, discovered with AI assistance.
\begin{lemma}[Expansion inheritance under vertex subsampling]\label{lem:main}
For every $a>0$, the constant $C_a=30000(a+4)$ has the following property.
Let $G$ be an $(n,d,\lambda)$-graph with adjacency
matrix $A$, and suppose that $\lambda\leq (1-\delta)d$. Let $m\ge2$ be an integer, set $p=m/n$ and $D=pd$, and assume that $p\le\frac{\delta}{100}$ and $D\ge C_a\delta^{-2}\log n$. Choose $(U,V)$ uniformly among all ordered pairs of disjoint $m$-subsets
of $V(G)$, and let $K=A[U,V]$ be the biadjacency matrix of $G[U,V]$.
With probability at least $1-n^{-a}$, the following holds.
\begin{align}
 s_2(K)&\le\left(1-\frac{\delta}{2}\right)D.
 \label{eq:spectral-conclusion}
\end{align}
\end{lemma}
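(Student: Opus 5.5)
We bound $s_2(K)$ by exhibiting a rank-one approximation $R$ of $K$ and bounding $\|K-R\|$, since $s_2(K)\le \|K-R\|$ for every matrix $R$ of rank at most one (Weyl's inequality for singular values). The natural candidate is $R=p\cdot\frac{d}{n}J_{U\times V}$, the restriction of the rank-one part $\frac dn J$ of $A$. Write $A=\frac dn J+E$, where $\|E\|\le\lambda\le(1-\delta)d$ by the spectral assumption. Then
\[
K-\tfrac dn J_{U\times V}=E[U,V],
\]
so it suffices to show that, with probability at least $1-n^{-a}$, $\|E[U,V]\|\le (1-\delta/2)pd=(1-\delta/2)D$. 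Here $D=pd$, and the target is to show that restricting a matrix of norm $\lambda$ to a random $p$-fraction of rows and columns shrinks its norm by roughly a factor $p$, up to an additive error $O(\sqrt{D\log n})$. The conditions $D\ge C_a\delta^{-2}\log n$ and $p\le\delta/100$ are exactly what make that error at most $\delta D/2$.

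First, to decouple the choice of $U$ and $V$, I will replace the uniform pair of disjoint $m$-sets by independent Bernoulli selections. Each vertex is independently put in $U$ with probability $p$, in $V$ with probability $p$, or in neither, using the disjoint-selection trick. Conditioning on sizes exactly $m$ happens with probability $\Omega(1/n)$, which is absorbed by adjusting $a$ (hence the $a+4$ in $C_a$); monotonicity or coupling of submatrices can also be used. With diagonal selectors $P_U,P_V$ we have $E[U,V]=P_UEP_V$, and I will proceed in two steps, each a matrix Bernstein application.

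\textbf{Step 1 (sample columns).} Consider $M=EP_V=\sum_v \xi_v E e_ve_v^T$. I will bound $\|MM^T\|=\|\sum_v\xi_v E_vE_v^T\|$, where $E_v$ is the $v$-th column. Its expectation is $pEE^T$, of norm at most $p\lambda^2$. Each summand has norm $\|E_v\|^2\le d+d^2/n\le 2d$. Matrix Bernstein then gives
\[
\|MM^T\|\le p\lambda^2+O\!\left(\sqrt{p\lambda^2 d\log n}+d\log n\right).
\]

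\textbf{Step 2 (sample rows).} Compute $\|P_UM\|^2=\|M^TP_UM\|=\|\sum_u \eta_u M_u^TM_u\|$ with $M_u$ the rows of $M$. The expectation is $pM^TM$, of norm at most $p\|M\|^2$. The summands have norm $\|M_u\|^2\le \sum_{v\in V}E_{uv}^2\lesssim \deg_V(u)+d^2p/n$. This needs a uniform bound: by Chernoff and a union bound, every vertex has at most $2D+O(\log n)$ neighbours in $V$, so the row norms are $O(D)$ (here $D\gtrsim\log n$). Bernstein then gives
\[
\|P_UM\|^2\le p\|M\|^2+O\!\left(\sqrt{p\|M\|^2D\log n}+D\log n\right).
\]

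Combining the two steps yields $\|E[U,V]\|^2\le p^2\lambda^2+O(p\lambda\sqrt{pd\log n}\,)+O(pd\log n)+\dots$, that is, $\|E[U,V]\|\le p\lambda+O(\sqrt{D\log n})$. Since $p\lambda\le(1-\delta)D$, and $\sqrt{D\log n}\le \delta D/C'$ when $D\ge C_a\delta^{-2}\log n$, this is at most $(1-\delta/2)D$.

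The main obstacle is Step 1. Naively, the term $d\log n$ there, multiplied by $p$ after Step 2, gives $pd\log n=D\log n$, which is fine. However, the term $\sqrt{p\lambda^2d\log n}$ contributes $p\cdot\lambda\sqrt{pd\log n}$ after row sampling, giving $\sqrt{p}\,\lambda\sqrt{D\log n}$. This is still $\le \lambda\sqrt{D\log n}/\ldots$, and dividing by $D$ gives $\sqrt{\log n/(pD)}\cdot\lambda/d$, which could be large when $p$ is small. So Step 1 must be done with the sharper column bound $\|E_v\|^2$ relative to $\lambda$, or, better, Steps 1 and 2 must be ordered so that the uniform row-degree bound (which is $O(D)$ rather than $O(d)$) is used in both. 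Concretely, I will apply Step 1 to $P_UE$ after first conditioning on the Chernoff degree bounds for both $U$ and $V$. Then all Bernstein summands have size $O(D)$ in the second step and the variance is controlled by $p\|\cdot\|^2$. The condition $p\le\delta/100$ is used to absorb the $d^2p/n$ and $J$-correction terms, and the diagonal effects from $U\cap V=\emptyset$.
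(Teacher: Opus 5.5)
Your argument is correct and is essentially the paper's proof. You reduce to $\|E[U,V]\|$ with $E=A-\frac dnJ$ via $s_2(K)\le\|K-\frac dnJ_{U\times V}\|$. You then apply matrix Bernstein twice in the form $\|F_I\|\le\sqrt q\,\|F\|+c\sqrt t$, the second time with $c^2=O(D)$ from a Chernoff bound on degrees into the already-sampled set, and pay only a polynomial factor to condition on exact set sizes. The paper samples rows before columns, which is equivalent by the symmetry of $E$.

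Drop the ``main obstacle'' paragraph; it rests on an arithmetic slip. We have $p\cdot\lambda\sqrt{pd\log n}=p\lambda\sqrt{D\log n}$, not $\sqrt p\,\lambda\sqrt{D\log n}$. As a cross term in a squared norm it must be compared with $D^2$, which gives $(\lambda/d)\sqrt{\log n/D}\le\delta/\sqrt{C_a}$. So your combination $\|E[U,V]\|\le p\lambda+O(\sqrt{D\log n})$ already holds in your original order. The proposed fix of having $O(D)$-size summands in both steps is impossible, since the first step must sample full rows of $E$ of squared norm $d-d^2/n$. It is also unnecessary, since that step's $\sqrt{d\log n}$ error becomes $\sqrt{D\log n}$ after the factor $\sqrt p$.
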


To prove the above, we use the following form of matrix Bernstein inequality. Here, $\lambda_{\max}(\cdot)$ returns the algebraically largest eigenvalue of a self-adjoint matrix and $X_j^2$
is just matrix multiplication. For a real matrix $F$, $\norm{F}$ denotes its
operator norm with respect to the Euclidean norm.

\begin{theorem}[Matrix Bernstein {\cite[Theorem~1.4]{Tropp2012}}]\label{thm: matrix berstein}
Let ${X_k}$ be a finite sequence of independent random self-adjoint matrices of dimension $d$. Suppose that $\mathbb E X_k=0$ and $\lambda_{\max}(X_k)\le R$ almost surely for every $k$. Then, for every $t\ge0$,

$$
\mathbb P\left\{\lambda_{\max}\left(\sum_k X_k\right)\ge t\right\}
\le d\exp\left(-\frac{t^2/2}{\sigma^2+Rt/3}\right),
$$

where

$$
\sigma^2:=\left\|\sum_k\mathbb E(X_k^2)\right\|.
$$

In particular, for every $t>0$, with probability at least $1-re^{-t}$,

 $$\lambda_{\max}\!\left(\sum_{j=1}^sX_j\right)
 \le\sqrt{2\sigma^2t}+\frac23Rt.$$

\end{theorem}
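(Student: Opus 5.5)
The plan is the matrix Laplace transform method, which reduces the tail bound to a bound on a matrix cumulant generating function; Lieb's concavity theorem handles the sum of independent terms, and a scalar inequality transferred to matrices handles each term. Write $Y=\sum_k X_k$. We may assume $R\ge 0$: since $\mathbb E X_k=0$, we have $\lambda_{\max}(X_k)\ge 0$ with positive probability unless $X_k=0$ almost surely. For any $\theta>0$, Markov's inequality and the bound $e^{\theta\lambda_{\max}(Y)}=\lambda_{\max}(e^{\theta Y})\le \operatorname{tr} e^{\theta Y}$ (valid since $e^{\theta Y}$ is positive definite) give
$$
\mathbb P\{\lambda_{\max}(Y)\ge t\}\le e^{-\theta t}\,\mathbb E\operatorname{tr}\exp(\theta Y).
$$

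The first key step, and the main obstacle, is controlling $\mathbb E\operatorname{tr}\exp(\sum_k\theta X_k)$; unlike the scalar case, the exponential does not factor over the sum. I would invoke Lieb's theorem: for fixed self-adjoint $H$, the map $A\mapsto \operatorname{tr}\exp(H+\log A)$ is concave on positive definite matrices. Condition on $X_1,\dots,X_{k-1}$ and apply Jensen's inequality in $X_k$, writing $\theta X_k=\log e^{\theta X_k}$. Iterating over $k$ gives the subadditivity bound
$$
\mathbb E\operatorname{tr}\exp\Bigl(\sum_k\theta X_k\Bigr)\le \operatorname{tr}\exp\Bigl(\sum_k\log\mathbb E e^{\theta X_k}\Bigr).
$$
I would quote Lieb's theorem rather than reprove it; this is the only genuinely non-elementary input.

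Next I bound each matrix moment generating function. The scalar function $f(x)=(e^{\theta x}-\theta x-1)/x^2$, extended by $f(0)=\theta^2/2$, is increasing on $\mathbb R$. Since every eigenvalue of $X_k$ is at most $R$, the transfer rule (apply the scalar inequality on each eigenvalue) gives
$$
e^{\theta X_k}\preceq I+\theta X_k+f(R)X_k^2.
$$
Taking expectations and using $\mathbb E X_k=0$ yields $\mathbb E e^{\theta X_k}\preceq I+g(\theta)\,\mathbb E X_k^2\preceq \exp(g(\theta)\,\mathbb E X_k^2)$, where $g(\theta)=(e^{\theta R}-\theta R-1)/R^2$. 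Because the logarithm is operator monotone, $\log\mathbb E e^{\theta X_k}\preceq g(\theta)\,\mathbb E X_k^2$. Because $\operatorname{tr}\exp$ is monotone with respect to $\preceq$, we obtain
$$
\operatorname{tr}\exp\Bigl(\sum_k\log\mathbb E e^{\theta X_k}\Bigr)\le d\exp\bigl(g(\theta)\sigma^2\bigr),
$$
using $\lambda_{\max}\bigl(\sum_k\mathbb E X_k^2\bigr)=\sigma^2$.

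It remains to optimise over $\theta$. Comparing Taylor series term by term ($k!\ge 2\cdot 3^{k-2}$) gives $g(\theta)\le \frac{\theta^2/2}{1-R\theta/3}$ for $0<\theta<3/R$. Choosing $\theta=t/(\sigma^2+Rt/3)$ makes the exponent $-\theta t+g(\theta)\sigma^2$ at most $-\frac{t^2/2}{\sigma^2+Rt/3}$, which is the first claim.

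For the "in particular" statement, interpreting $r$ as the dimension $d$, rename the parameter $u$. Setting $\frac{t^2/2}{\sigma^2+Rt/3}=u$ and solving the quadratic gives
$$
t=\frac{Ru}{3}+\sqrt{\frac{R^2u^2}{9}+2\sigma^2u}\le \sqrt{2\sigma^2u}+\frac{2}{3}Ru,
$$
by subadditivity of the square root. Since the tail bound is monotone in $t$, the stated bound follows with probability at least $1-de^{-u}$.
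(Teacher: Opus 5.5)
Your proposal is correct and is the standard proof of the cited result, Tropp's Theorem~1.4: the matrix Laplace transform, Lieb's concavity theorem for subadditivity of the matrix cumulant generating function, the transfer-rule bound $\mathbb E e^{\theta X_k}\preceq \exp(g(\theta)\,\mathbb E X_k^2)$, and the choice $\theta=t/(\sigma^2+Rt/3)$; the paper gives no proof of its own and only quotes Tropp. Your inversion of the tail bound for the \emph{in particular} clause (reading $r$ as the dimension $d$) is also right, and the one case your formulas do not literally cover, $\sigma^2=0$ (where $\theta$ reaches the endpoint $3/R$), is trivial because then every $X_k$ vanishes almost surely.
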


For a real matrix $F$, let
$s_1(F)\ge s_2(F)\ge\cdots$ denote its singular values, i.e., the square roots
of the eigenvalues of $F^{\mathsf T}F$ in nonincreasing order. Thus
$\norm{F}=s_1(F)$. We write $F[I,J]$ for the submatrix with row set $I$ and
column set $J$, and $J_{r,s}$ for the $r\times s$ all-ones matrix; when
$r=s$, we abbreviate this to $J_r$. For a real matrix $F$, we write $F^{\mathsf T}$ for its transpose.
For self-adjoint matrices $A$ and $B$, we write $A\preceq B$ if $B-A$ is positive semidefinite; equivalently, $x^{\mathsf T}Ax\le x^{\mathsf T}Bx$ for every vector $x$. In particular, if $0\preceq A\preceq B$, then $\norm{A}\le \norm{B}$. For a vector $f\in\mathbb R^r$, the matrix $ff^{\mathsf T}$ is the rank-one positive semidefinite matrix whose $(i,j)$-entry is $f_if_j$.

The proof of Lemma~\ref{lem:main} follows by applying the following lemma twice, once to sample the rows, and once to sample the columns.

\begin{lemma}\label{lem:columns}
Let $F\in\R^{r\times s}$, where $r,s\ge1$, with columns
$f_1,\ldots,f_s$, and set $c=\max_{j\in[s]}\norm{f_j}_2$.
If $I$ is a uniformly random $k$-subset of $[s]$, where $0\le k\le s$,
and $q=k/s$, let $F_I$ be the submatrix consisting of the columns indexed
by $I$. Then, for every $t>0$,
\begin{equation}\label{eq:column-sampling}
 \PP\!\left(\norm{F_I}>
       \sqrt q\,\norm{F}+c\sqrt t\right)
 \le r(s+1)e^{-t}.
\end{equation}
\end{lemma}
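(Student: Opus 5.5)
The plan is to reduce to independent (Bernoulli) column sampling, apply Theorem~\ref{thm: matrix berstein} to the Gram matrix $F_IF_I^{\mathsf T}$, and then transfer the bound back to the uniform $k$-subset. I expect the factor $(s+1)$ in \eqref{eq:column-sampling} to be exactly the price of this transfer.

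\emph{Reduction to Bernoulli sampling.} Let $\xi_1,\dots,\xi_s$ be independent $\mathrm{Bernoulli}(q)$ variables, and let $I'=\{j:\xi_j=1\}$. Conditioned on $|I'|=k$, the set $I'$ is a uniformly random $k$-subset of $[s]$. Since $k=qs$ is a mode of $\mathrm{Bin}(s,q)$ and the binomial has $s+1$ possible values, $\PP(|I'|=k)\ge 1/(s+1)$. Hence
\[
\PP\bigl(\norm{F_I}>\sqrt q\,\norm{F}+c\sqrt t\bigr)\le (s+1)\,\PP\bigl(\norm{F_{I'}}>\sqrt q\,\norm{F}+c\sqrt t\bigr).
\]
It therefore suffices to show that the right-hand probability is at most $r e^{-t}$. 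The cases $k=0$ and $k=s$ are trivial.

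\emph{Matrix Bernstein step.} We have $F_{I'}F_{I'}^{\mathsf T}=\sum_j \xi_j f_jf_j^{\mathsf T}$. Set $X_j=(\xi_j-q)f_jf_j^{\mathsf T}$; these are independent, self-adjoint $r\times r$ matrices with mean zero. Since $f_jf_j^{\mathsf T}\succeq 0$ has norm $\norm{f_j}^2\le c^2$, we get $\lambda_{\max}(X_j)\le(1-q)c^2\le c^2=:R$. For the variance,
\[
\mathbb E X_j^2=q(1-q)\norm{f_j}^2 f_jf_j^{\mathsf T}\preceq qc^2 f_jf_j^{\mathsf T}.
\]
Summing over $j$ gives $0\preceq\sum_j\mathbb EX_j^2\preceq qc^2FF^{\mathsf T}$, so $\sigma^2\le qc^2\norm{F}^2$. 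Theorem~\ref{thm: matrix berstein} then yields, with probability at least $1-re^{-t}$,
\[
\norm{F_{I'}}^2=\lambda_{\max}\bigl(F_{I'}F_{I'}^{\mathsf T}\bigr)\le q\norm{F}^2+\sqrt{2qc^2\norm{F}^2t}+\tfrac23c^2t.
\]

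\emph{Conclusion.} The right-hand side is at most
\[
q\norm{F}^2+2\sqrt q\,\norm{F}c\sqrt t+c^2t=\bigl(\sqrt q\,\norm{F}+c\sqrt t\bigr)^2,
\]
because $\sqrt2\le 2$ and $\tfrac23\le 1$. Taking square roots and combining with the reduction gives \eqref{eq:column-sampling}. The only genuinely non-routine point is handling sampling without replacement; the mode-of-binomial conditioning trick is what I would try first, and it appears to account exactly for the stated factor $s+1$.
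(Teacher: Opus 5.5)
Your proposal is correct and is essentially the paper's proof: independent Bernoulli$(q)$ column selection, matrix Bernstein applied to $X_j=(\xi_j-q)f_jf_j^{\mathsf T}$ with $R=c^2$ and $\sigma^2\le qc^2\norm{F}^2$, the bound absorbed into $(\sqrt q\,\norm{F}+c\sqrt t)^2$ using $\sqrt2\le2$ and $\tfrac23\le1$, and conditioning on the mode of $\operatorname{Bin}(s,q)$ to pay the factor $s+1$. The only difference is cosmetic: you do the reduction to Bernoulli sampling first, whereas the paper does it at the end.
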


\begin{proof}
We assume that
$0<k<s$ and $c>0$, as otherwise the statement is trivial.
First select columns independently with probability $q$.
Write $f_j\in\R^r$ for column $j$ of $F$, and let $\xi_j$ be its indicator random variable. If $F_I$ denotes the selected matrix, then we observe that
\begin{equation}\label{eq:gram}
 F_IF_I^{\mathsf T}=\sum_{j=1}^s\xi_j f_jf_j^{\mathsf T}.
\end{equation}
Note also that  $\norm{F_I}^2=\lambda_{\max}(F_IF_I^{\mathsf T})$. To apply Matrix Bernstein, we need to center the matrices, so set $X_j=(\xi_j-q)f_jf_j^{\mathsf T}$, noting $\E X_j=0$. Since
$\norm{f_jf_j^{\mathsf T}}=\norm{f_j}_2^2$, we have
$\norm{X_j}\le c^2$. Moreover,
\begin{align*}
 \sum_j\E(X_j^2)
 &=q(1-q)\sum_j\norm{f_j}_2^2 f_jf_j^{\mathsf T}\\
 &\preceq qc^2\sum_j f_jf_j^{\mathsf T}
 =qc^2FF^{\mathsf T}.
\end{align*}
The variance parameter in Theorem~\ref{thm: matrix berstein} is therefore at most
$qc^2\norm{F}^2$. Theorem~\ref{thm: matrix berstein} implies that (using the `in particular' part), with failure probability at most $re^{-t}$, we have 
\begin{align*}
 \norm{F_I}^2
 &\le q\norm{F}^2+\sqrt{2q}\,c\norm{F}\sqrt t+\frac23c^2t\\
 &\le\bigl(\sqrt q\,\norm{F}+c\sqrt t\bigr)^2.
\end{align*}
This proves the desired bound under binomial sampling.

To pass to a uniform $k$-set, condition on $\sum_j\xi_j=k$.
For $q=k/s$, the integer $k$ is a mode of $\operatorname{Bin}(s,q)$,
so
\begin{equation}\label{eq:conditioning}
 \PP\!\left(\sum_j\xi_j=k\right)\ge\frac1{s+1}.
\end{equation}
Consequently, conditioning increases the failure probability by at most
$s+1$, which proves~\eqref{eq:column-sampling}.
\end{proof}

We can now start the proof of the subsampling lemma.

\begin{proof}[Proof of Lemma~\ref{lem:main}]
We denote $V(G)=[n]$ and set $t=(a+4)\log n$,  $\eta=\frac{\delta}{100}$. Our choice of $C_a$ gives $D\ge30000\delta^{-2}t$.

We first define a centered version of the matrix as $B=A-\frac dnJ_n$, noting $\norm{B}\le(1-\delta)d.$ 
Observe, by $d$-regularity, every row and column of $B$ has squared Euclidean norm
\begin{equation}\label{eq:row-norms}
 d\left(1-\frac dn\right)^2
 +(n-d)\left(\frac dn\right)^2
 =d-\frac{d^2}{n}\le d.
\end{equation}
Also $K=(d/n)J_m+B[U,V]$, so we can deduce
\begin{equation}\label{eq:rank-one}
 s_2(K)\le\norm{B[U,V]}.
\end{equation}
Above we used the inequality
$s_2(M)\le\norm{M-R}$ that holds when $\rank R\le1$, which follows from the min-max characterization of $s_2$. 

\smallskip
We first sample the rows. By transposing, we may apply Lemma~\ref{lem:columns} to the rows, and
with~\eqref{eq:row-norms}, we obtain
\begin{equation}\label{eq:sample-rows}
 \norm{B[U,[n]]}\le\sqrt p\,\norm{B}+\sqrt{dt}
\end{equation}
except with probability at most $n(n+1)e^{-t}$.
By Chernoff's bound and a union bound, we also have, with very high probability, for every $j\in[n]$,
\begin{align}
 \norm{B[U,\{j\}]}_2^2
 &=\sum_{i\in U}\left(A_{ij}-\frac dn\right)^2\notag\\
 &\le d_G(j,U)+m\left(\frac dn\right)^2
 \le2D+\frac dnD\le3D.
 \label{eq:restricted-columns}
\end{align}

\smallskip
We now condition on (\ref{eq:sample-rows}) and (\ref{eq:restricted-columns}), and sample the columns. The set $V$ is a uniform $m$-subset of $[n]\setminus U$,
so its proportion is $q=\frac{m}{n-m}=\frac p{1-p}$. Apply Lemma~\ref{lem:columns} to the matrix
$F=B[U,[n]\setminus U]$. With failure probability at most
$m(n-m+1)e^{-t}\le n(n+1)e^{-t}$, we have that
\begin{equation}\label{eq:sample-columns}
 \norm{B[U,V]}
 \le\sqrt q\,\norm{F}
   +\max_{j\notin U}\norm{B[U,\{j\}]}_2\sqrt t.
\end{equation}
Since this conditional failure bound is independent of the choice of $U$, it also holds unconditionally after averaging over $U$.

Deleting columns cannot increase the operator norm, so
$\norm{F}\le\norm{B[U,[n]]}$. Conditional on the high probability events,
\eqref{eq:sample-rows}, \eqref{eq:restricted-columns},
and~\eqref{eq:sample-columns}, we have that 
\begin{align}
 \norm{B[U,V]}
 &\le\sqrt{\frac p{1-p}}
       \bigl(\sqrt p\,\norm{B}+\sqrt{dt}\bigr)+\sqrt{3Dt}\notag\\
 &=\frac p{\sqrt{1-p}}\norm{B}
   +\left(\frac1{\sqrt{1-p}}+\sqrt3\right)\sqrt{Dt}\notag\\
 &\le\frac p{\sqrt{1-p}}\norm{B}+4\sqrt{Dt},
 \label{eq:compression-estimate}
\end{align}
where in the last inequality we used that $p\le1/2$.

We now combine everything.
By~\eqref{eq:rank-one} and using $D\geq 30000\delta^{-2}t$, we have $\frac{s_2(K)}D
 \le\frac{1-\delta}{\sqrt{1-p}}+4\sqrt{\frac tD}.
$
Using $p\le\delta/100$, $(1-p)^{-1/2}\le1+p$, and \eqref{eq:restricted-columns}, we obtain
$$
 \frac{s_2(K)}D
 \le1-\delta+p+\frac{\delta}{4}
 \le1-\frac{\delta}{2}.
$$
This proves the desired spectral bound~\eqref{eq:spectral-conclusion} conditional on the good events.
The sum of the failure probabilities in~\eqref{eq:restricted-columns}
and the two matrix sampling steps is at most $6n(n+1)e^{-t}
 =6n(n+1)n^{-(a+4)}
 \le n^{-a}$, as desired.
\end{proof}

\section{Making connections}

The following lemma allows us to connect a collection of pairs of vertices with vertex-disjoint paths in mildly expanding graphs. We will use it in the next section to build a Hamilton router.

\begin{lemma}[Connecting lemma]
\label{lem:cut-dense-connector}
There is an absolute constant $C>0$ such that the following holds. Let $G$ be a graph and let $R\subseteq V(G)$ with $|R|\ge 3$. Suppose that $G[R]$ is $p$-cut-dense, write $\Delta:=\Delta(G[R])$, and assume that $p|R|\ge \gamma\Delta
$ for some $0<\gamma\le1$. Let $(a_i,b_i)_{i=1}^k$ be pairs of vertices in $V(G)\setminus R$, and let $T:=\{a_i,b_i:i\in[k]\}$. Let $\beta\in (0,1)$ and suppose that every vertex occurs at most $10$ times among $a_1,b_1,\ldots,a_k,b_k$, that $
d_G(x,R)\ge \beta\Delta
\text{ for every }x\in T,
$ and that $
d_G(v,T)\le \frac{\beta\gamma^2\Delta}{C\log(2|R|)}
\text{ for every }v\in R.
$

Then, there exist $a_i$--$b_i$ paths $P_i$, $i\in[k]$, whose internal vertex sets are pairwise disjoint and contained in $R$, and such that $
|P_i|\le \frac{C}{\gamma}\log(2|R|)
\text{ for every }i\in[k].
$

\end{lemma}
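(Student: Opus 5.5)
The plan is to use Haxell's hypergraph matching theorem, as the introduction suggests. The two ingredients are a short-path expansion property of the cut-dense graph $G[R]$, and a robust version of it that survives deleting a few vertices.

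\textbf{Step 1: expansion in $G[R]$.} First I would extract vertex expansion from $p$-cut-density. For $S\subseteq R$ with $|S|\le |R|/2$ we have $e(S,R\setminus S)\ge p|S||R|/2\ge \gamma\Delta|S|/2$. Every vertex of $N_R(S)\setminus S$ receives at most $\Delta$ of these edges, so
\[
|N_R(S)\setminus S|\ge \tfrac{\gamma}{2}|S| .
\]
The same bound persists after deleting any set $W\subseteq R$ with $|W|\le \gamma|S|/4$, because $W$ absorbs at most $\Delta|W|$ of the cut edges. Hence, in $G[R\setminus W]$, a ball around a set of size $\sigma$ grows by a factor $(1+\gamma/4)$ per step until it has size more than $|R|/2$, provided $|W|$ is small compared to $\gamma\sigma$. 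This takes $O(\gamma^{-1}\log(2|R|))$ steps. Two such balls, each of size more than $|R|/2$, must intersect or be joined by an edge. So any two sets of size about $\beta\Delta$ are joined by a path of length $O(\gamma^{-1}\log|R|)$ avoiding $W$, as long as $|W|\le c\gamma\beta\Delta$.

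\textbf{Step 2: set up Haxell's theorem.} For each $i$, let $\mathcal H_i$ be the family of vertex sets $\mathrm{int}(P)\subseteq R$ of $a_i$--$b_i$ paths $P$ with all internal vertices in $R$ and $|P|\le L:=\frac{C}{\gamma}\log(2|R|)$. Haxell's condition for a system of disjoint representatives of these $L$-uniform families is the following: for every $I\subseteq[k]$ and every set $W$ of size at most $(2L-1)(|I|-1)$, some $i\in I$ has a member of $\mathcal H_i$ disjoint from $W$. I would verify this by averaging. Fix $I$ and $W$ with $|W|\le 2L|I|$. Since each vertex is an endpoint at most $10$ times, and $d_G(v,T)\le \beta\gamma^2\Delta/(C\log 2|R|)$, the number of edges from $T_I$ into $W$ satisfies
\[
e(T_I,W)\le |W|\cdot\frac{\beta\gamma^2\Delta}{C\log(2|R|)}\le \frac{2\beta\gamma\Delta|I|}{C}\cdot O(1).
\]
Here $T_I$ denotes the endpoints of the pairs indexed by $I$, and $|T_I|\ge |I|/5$. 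So some endpoint pair $(a_i,b_i)$, $i\in I$, has both $d(a_i,W)$ and $d(b_i,W)$ at most $\beta\Delta/2$ once $C$ is large. Then $a_i$ and $b_i$ each retain at least $\beta\Delta/2$ neighbours in $R\setminus W$.

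\textbf{Step 3: the main obstacle.} It remains to find, in $R\setminus W$, a short path between $N(a_i)\setminus W$ and $N(b_i)\setminus W$. The difficulty is that $|W|$ can be as large as about $L|I|$, which may greatly exceed $\gamma\beta\Delta$. So Step 1's robustness against a small deleted set does not apply directly.

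I would handle this by applying the expansion argument to the union over all $i\in I$ at once. Grow the neighbourhoods of $T_I$ simultaneously in $R\setminus W$. The starting sets have total size at least about $|I|\beta\Delta/2$, counted with multiplicity at most $\Delta$ per vertex, which I would check carefully. This is then large compared to $|W|/\gamma$, because $L|I|\ll \gamma\beta\Delta|I|$ in the relevant regime, or at least $|W|$ is smaller than $\gamma$ times the initial expanded set. So $W$ only removes a negligible fraction at each expansion step.

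Each union ball then reaches size more than $|R|/2$. A pigeonhole argument on which pairs' balls reach half of $R$ would yield a single index $i$ whose two balls meet. If the union-level argument fails to localise to a single pair, the fallback is a more careful choice of $i$. I would take the $i$ whose endpoints have the fewest $W$-edges, and use the fact that each vertex of $R$ is adjacent to few endpoints to show that individual balls grow just as the union does.

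Combining the path found in Step 3 with Haxell's theorem gives the disjoint paths $P_i$, each of length at most $\frac{C}{\gamma}\log(2|R|)$.
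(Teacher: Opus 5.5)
Your Haxell set-up and the averaging in Step~2 are fine. The problem is Step~3, which carries the whole difficulty: it is not actually carried out, and the fallback you propose fails. Choosing $i$ to minimise the number of edges from $\{a_i,b_i\}$ into $W$ controls only edges into $W$ itself. It says nothing about the part of $R$ that $W$ cuts off. $W$ is adversarial, and $|W|$ may be as large as $2L|I|$, which can be of order $\gamma|R|/C$ and hence far larger than $\gamma\beta\Delta$. So nothing prevents $W$ from being the vertex boundary of a set $S\supseteq N_G(a_1,R)$ with $S\cap N_G(b_1,R)=\varnothing$, together with one neighbour of every other endpoint. Then $a_1,b_1$ send no edges to $W$, the pair $(a_1,b_1)$ is the unique minimiser, and it is exactly the pair whose ball is trapped inside $S$. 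Likewise, the union ball around $N(T_I)\setminus W$ exceeding $|R|/2$ says nothing about any single pair, so the pigeonhole step has no content as stated.

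The missing idea is to bound the total size of the region that $W$ can isolate, and to choose $i$ relative to that region. First, double counting gives $\sum_{i\in I}(d_G(a_i,R)+d_G(b_i,R))\le 10\sum_{v\in R}d_G(v,T)$, hence $|I|\le 5\gamma^2|R|/(C\log(2|R|))$. With path length $\ell=C_0\gamma^{-1}\log(2|R|)$ and $C_0\ll C$, this yields $|W|\le 2\ell|I|\le\gamma|R|/100$. Next, take $Z\subseteq R\setminus W$ maximal subject to $|Z|\le 3|R|/4$ and $e(Z,R\setminus(W\cup Z))<\gamma\Delta|Z|/16$. Cut-density forces $e(Z,W)>3\gamma\Delta|Z|/16$, so $|Z|\le 6|W|/\gamma$. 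Maximality, using $|W|\le\gamma|R|/100$ to stay below the $3|R|/4$ cap, then makes $K:=G[R\setminus(W\cup Z)]$ a $\gamma/16$-vertex-expander of diameter $O(\gamma^{-1}\log(2|R|))$. Finally, suppose every $i\in I$ had an endpoint $x_i$ with $N_G(x_i,R)\subseteq W\cup Z$. Then $\beta\Delta|I|\le 10\sum_{v\in W\cup Z}d_G(v,T)\le 70|W|\beta\gamma\Delta/(C\log(2|R|))\le (140C_0/C)\beta\Delta|I|$, a contradiction. This is precisely where the second factor of $\gamma$ in the hypothesis on $d_G(v,T)$ pays for $|Z|=O(|W|/\gamma)$. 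Hence some pair has both endpoints adjacent to $K$, and a shortest path through $K$ gives the required connection of length at most $\mathrm{diam}(K)+2\le\ell$.
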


One can compare the statement with
\cite[Definition~3.3 and Lemma~3.4]{CJMM}
and \cite[Lemma~5.1]{LetzterMethukuSudakov2026}, which obtains a similar conclusion with an additional property that the paths found are contained within a random reservoir.
In our lemma, the reservoir $R$ itself is assumed to expand well, hence
the reachability arguments in \cite{CJMM, LetzterMethukuSudakov2026} are not necessary. In our application, $R$ will still be a random subset, and we will show that $R$ inherits a cut-density condition from the expander it is sampled from, using the result in the previous section, Lemma~\ref{lem:main}. 

\par The proof thus reduces to a standard application of
Haxell's hypergraph matching theorem \cite{Haxell1995}, which we state below, as it was stated as Theorem 2 in \cite{AsadpourFeigeSaberi2008}. Recall that $\tau(\mathcal H)$ is the minimum number of vertices of $B$ needed to meet every edge of a hypergraph $\mathcal H$.

\begin{theorem}[Haxell] Let $\mathcal H$ be a bipartite hypergraph with vertex partition $A\cup B$, where every edge $e\in E(\mathcal H)$ satisfies
$|e\cap A|=1\text{ and }|e\cap B|\le r-1.$
For $S\subseteq A$, let $\mathcal H_S$ be the hypergraph on $B$ whose edge set is $E(\mathcal H_S)
:=
\{
e\cap B:
e\in E(\mathcal H),\ e\cap S\neq\varnothing
\}.$
If $\tau(\mathcal H_S)>
(2r-3)(|S|-1)\text{ for every }S\subseteq A,$ then $\mathcal H$ has a matching saturating $A$.
\end{theorem}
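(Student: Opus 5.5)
The statement immediately above is Haxell's theorem, which the paper quotes from \cite{Haxell1995,AsadpourFeigeSaberi2008}. If I were to reprove it, I would follow Haxell's alternating-tree argument: a Hall-type augmenting procedure in which the covering number $\tau$ plays the role of the neighbourhood size.

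\textbf{Setup.} Suppose the conclusion fails, and let $M$ be a maximum matching of $\mathcal H$. Fix $a_0\in A$ not covered by $M$. I would grow an alternating structure consisting of two edge sets.
\begin{itemize}
\item A set of \emph{blue} edges $b_1,\dots,b_\ell\notin M$.
\item A set of \emph{red} edges $R\subseteq M$.
\end{itemize}
The structure will maintain the following invariants.
\begin{itemize}
\item The blue edges have pairwise disjoint $B$-parts.
\item Each blue edge $b_i$ contains an $A$-vertex from $S_{i-1}=\{a_0\}\cup\{e\cap A: e\text{ red, added before step } i\}$.
\item The red edges are exactly the edges of $M$ whose $B$-part meets some blue $B$-part.
\item Each blue edge meets at least one red edge.
\end{itemize}

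\textbf{Growth step.} Given the current structure, let $S=\{a_0\}\cup\{e\cap A:e\in R\}$, so $|S|-1=|R|$. Let $W$ be the union of the $B$-parts of all blue and red edges. This set $W$ is a cover candidate for $\mathcal H_S$, and the key counting claim is
\[
|W|\le(2r-3)|R|.
\]
To obtain it, I would charge each blue edge (at most $r-1$ vertices of $B$) and each red edge (at most $r-1$ vertices) to red edges. Using that each blue edge meets a red edge, one shared vertex is saved per blue edge, and there are at most $|R|$ blue edges in a minimal structure. Given the claim, the hypothesis $\tau(\mathcal H_S)>(2r-3)(|S|-1)$ produces an edge $e\in\mathcal H$ meeting $S$ whose $B$-part avoids $W$.
\begin{itemize}
\item If $e\cap B$ meets no edge of $M$, then $e$ is \emph{free}.
\item Otherwise, add $e$ as a new blue edge, and add the edges of $M$ meeting $e\cap B$ as new red edges. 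These are new, since $e$ avoids $W$. This increases $|R|$, and the invariants persist.
\end{itemize}

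\textbf{Augmentation and termination.} This is the main obstacle, because a free edge does not immediately give a longer matching. If the free edge $e$ contains $a_0$, then $M+e$ is a larger matching, a contradiction. Otherwise $e$ contains the $A$-vertex $a$ of some red edge $f\in M$. I would then replace $f$ by $e$ in $M$, obtaining another maximum matching that still misses $a_0$. Next I would delete from the structure $f$, every blue and red edge added after $f$, and any blue edge that now meets no red edge. The blue edge that caused $f$ to be added may become free in turn, and then the swap is repeated.

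To rule out cycling, I would use Haxell's potential. Consider the vector
\[
(|R_1|,-|B_1|,|R_2|,-|B_2|,\dots)
\]
of layer sizes, where $R_i$ and $B_i$ are the red and blue edges added at step $i$. Every growth step and every truncation–swap strictly increases this vector lexicographically. The vector lives in a finite set, so eventually $a_0$ itself acquires a free edge, contradicting maximality of $M$.

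The delicate points are the following.
\begin{itemize}
\item Verifying that $|W|\le(2r-3)|R|$ still holds after truncations. This is where the exact constant $2r-3$, rather than $2r-2$, comes from. It needs each surviving blue edge to overlap at least one red edge in a $B$-vertex.
\item Checking that truncation does not decrease the earlier layers of the potential.
\end{itemize}
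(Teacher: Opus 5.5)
The paper does not prove this theorem; it quotes it from Haxell, in the form given by Asadpour, Feige and Saberi. So the relevant comparison is with the standard alternating-tree proof, which your proposal follows. Your invariants are correct, and so is the count $|W|\le (r-1)\ell+(r-1)|R|-\ell\le(2r-3)|R|$, once $\ell\le|R|$ is justified properly. The justification is that each surviving step keeps a nonempty set $R_i$ of red edges that it introduced, and these sets are pairwise disjoint. This is preserved precisely because a step whose red set becomes empty is collapsed.

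\textbf{The gap: the termination potential.} Every step adds exactly one blue edge, so your vector is $(|R_1|,-1,|R_2|,-1,\dots,|R_t|,-1)$.

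A truncation--swap at $f\in R_j$ leaves every coordinate before $|R_j|$ unchanged and replaces $|R_j|$ by $|R_j|-1$. A cascade, when $R_j$ empties, lowers an even earlier coordinate $|R_{j'}|$ with $j'<j$. In both cases the first coordinate that changes goes \emph{down}. Hence the vector strictly decreases, however you compare vectors of different lengths.

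Your claim that every operation strictly increases the vector is therefore false. The check you flag, that truncation does not decrease earlier layers, also cannot succeed, because removing $f$ from its layer is exactly a decrease there.

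\textbf{The repair.} Reverse the direction, and make appending a step count as a decrease. Use $\sigma=(|R_1|,\dots,|R_t|,\infty)$ in lexicographic order; the blue coordinates are superfluous.
\begin{itemize}
\item A growth step replaces the final $\infty$ by a finite entry, so $\sigma$ strictly decreases.
\item A swap or cascade lowers some $|R_j|$ while fixing all earlier entries, so $\sigma$ again strictly decreases.
\end{itemize}
Since $|R_i|\ge1$ and the $R_i$ are disjoint, $t\le|M|$, so $\sigma$ takes finitely many values and the process halts. By your covering argument it can only halt by producing a free edge through $a_0$, found either directly or at the end of a cascade. This contradicts the maximality of $M$.
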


We essentially model our argument based on Section 4.1 in \cite{BucicHeHuangSaranurak2026}, the main difference is that \cite{BucicHeHuangSaranurak2026} is concerned with edge-disjointness, as opposed to vertex-disjointness. 

\begin{proof}[Proof of Lemma~\ref{lem:cut-dense-connector}] Set $\ell:=C_0\gamma^{-1}\log(2|R|)$ with $C_0$ a sufficiently large constant. Thus, $\ell$ denotes the target upper bound on the length of the connecting paths we wish to find. We will later choose $C\gg C_0$. We begin with showing that after deleting a few vertices, there still remains a suitable pair $a_i-b_i$ with a short connection, avoiding the deleted vertices. 

    \begin{claim} For every $I\subseteq [k]$ and $U\subseteq R$ with $|U|\leq (2\ell-3)(|I|-1)$, there is some $i\in I$ and a $a_i-b_i$ path of length at most $\ell$ whose internal vertices are in $R\setminus U$.
\end{claim}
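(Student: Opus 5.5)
We will argue by contradiction, using only the cut-density of $G[R]$ and the degree conditions. Fix $I\subseteq[k]$ and $U\subseteq R$ with $|U|\le(2\ell-3)(|I|-1)\le 2\ell|I|$. We may assume $I\neq\varnothing$; since $I$ has at least one element, $|U|\le 2\ell k$. The plan is to pick an index $i\in I$ for which both $a_i$ and $b_i$ still have many neighbours in $R\setminus U$. We then grow balls around these neighbourhoods inside $G[R\setminus U]$ until they meet, within $\ell$ steps.

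\textbf{Step 1: a good index.} Let $N_i^a:=N_G(a_i)\cap(R\setminus U)$, and define $N_i^b$ similarly. We double count the edges between $T_I:=\{a_i,b_i:i\in I\}$ and $U$. Each $v\in U$ has $d_G(v,T)\le \beta\gamma^2\Delta/(C\log(2|R|))$, so
\[
e_G(T_I,U)\le |U|\frac{\beta\gamma^2\Delta}{C\log(2|R|)}\le 2\ell|I|\frac{\beta\gamma^2\Delta}{C\log(2|R|)}=\frac{2C_0\gamma\beta\Delta}{C}|I|.
\]
Since $C\gg C_0$, some $i\in I$ has $d_G(a_i,U)+d_G(b_i,U)\le\beta\Delta/2$. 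Each vertex is repeated at most $10$ times, which costs only a constant factor. Combined with $d_G(x,R)\ge\beta\Delta$, this index satisfies $|N_i^a|,|N_i^b|\ge\beta\Delta/2$.

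\textbf{Step 2: expansion of balls in $G[R]\setminus U$.} Let $X_0:=N_i^a$ and let $X_{j+1}$ be $X_j$ together with its neighbourhood in $R\setminus U$. We aim to show that $|X_j|$ grows geometrically until it exceeds $|R|/2$. While $|X_j|\le|R|/2$, cut-density gives $e_{G[R]}(X_j,R\setminus X_j)\ge p|X_j||R|/2\ge\gamma\Delta|X_j|/2$. At most $\Delta|U\cap N(X_j)|$ of these edges go into $U$. The edges into $(R\setminus U)\setminus X_j$ land on at least $e(\cdot)/\Delta$ distinct vertices, so
\[
|X_{j+1}\setminus X_j|\ge \frac{\gamma|X_j|}{2}-|U|.
\]
This yields growth by a factor $(1+\gamma/4)$ as long as $|X_j|\ge 4|U|/\gamma$. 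The same argument applies to the $b_i$ side.

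\textbf{Step 3 (main obstacle): the early stage when $|X_j|$ is small relative to $|U|$.} Here $|U|$ can be as large as about $\ell k$, which may exceed $\beta\Delta/2$, so the additive loss above is too crude. My first attempt is to bound the loss locally rather than globally: the edges from $X_j$ into $U$ number at most $\sum_{v\in U}d(v,X_j)$. A cleaner route is to count the edges from $U$ into $R$ more carefully. We use that $U$ only needs to be handled through the vertices of $U$ adjacent to the current ball. Alternatively, we can note that $|U|\le 2\ell k$, and that the hypothesis bounding $d(v,T)$ implies $k\lesssim |R|\beta\gamma^2/(C\log|R|)$ by double counting $e(T,R)\ge\beta\Delta|T|/10$. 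Hence $|U|\le \gamma|R|\cdot(2C_0\beta/C)$ is a tiny fraction of $|R|$.

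With this in hand, we rerun Step 2 in relative terms. The required inequality is $\gamma|X_j|/2\ge 2|U|$, and it holds once $|X_j|\ge \gamma^{-1}\cdot O(C_0\beta/C)\gamma|R|$. Thus the delicate range is $\beta\Delta/2\le|X_j|\le O(\beta|R|/C)$. If necessary, we will handle this range by choosing $U$-avoidance more cleverly, namely by passing to the subgraph on $R\setminus U$ and checking that it remains $(p/2)$-cut-dense for all sets of size at least $\beta\Delta/2$. This follows because a set $S$ with $|S|\ge\beta\Delta/2$ loses at most $|U|\Delta$ cut edges, which is small relative to $p|S||R|/2$ given the bounds above.

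\textbf{Step 4: conclusion.} After $O(\gamma^{-1}\log(2|R|))$ steps, both balls exceed $|R\setminus U|/2$ and therefore intersect. This gives an $a_i$--$b_i$ path of length at most $\ell$ whose interior lies in $R\setminus U$, provided $C_0$ is chosen large enough.
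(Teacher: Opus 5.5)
Steps 1 and 2 are correct as computations, but Step 3 is a genuine gap, and Step 4 rests on it. It also cannot be closed for the index chosen in Step 1.

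Your proposed fix is that $G[R\setminus U]$ stays $(p/2)$-cut-dense on all sets $S$ with $|S|\ge\beta\Delta/2$. This needs $|U|\Delta\ll p|S||R|=O(\Delta|S|)$, i.e.\ $|U|\ll|S|$, already at $|S|\approx\beta\Delta$. The hypotheses allow $|I|$ of order $\gamma^2|R|/(C\log(2|R|))$; note that the factor $\beta$ in your double count cancels. So $|U|$ may be of order $C_0\gamma|R|/C$. That is much larger than $\beta\Delta$ once $|R|/\Delta$ is large, as in the application, where $|R|/\Delta$ is of order $n/d$.

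Worse, the growth you want is simply false for some indices passing Step 1. Suppose $d_G(a_i,R)=O(\Delta)$ and $|R|\gg\Delta^2$. Then the vertex boundary of $N_G(a_i,R)$ in $G[R]$ has $O(\Delta^2)$ vertices and may be placed entirely in $U$. This gives $d_G(a_i,U)=0$, so $i$ can pass your Step 1 test. Yet the ball grown from $N_i^a$ in $G[R\setminus U]$ never leaves $N_G(a_i,R)$. So the index has to be chosen relative to the whole region that $U$ can cut off, not relative to $U$ alone.

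The missing idea, which is how the paper argues, is to make that region explicit. With $H:=G[R]$, take $W\subseteq R\setminus U$ of maximum size subject to $|W|\le 3|R|/4$ and $e_H(W,R\setminus(U\cup W))<\frac{\gamma\Delta}{16}|W|$.

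Cut-density forces more than $\frac{3\gamma\Delta}{16}|W|$ edges from $W$ into $U$, so $|W|\le 6|U|/\gamma$. Maximality, together with $|U|\le\gamma|R|/100$, then makes $K:=H-(U\cup W)$ a $\gamma/16$-vertex-expander. Hence $K$ has diameter $O(\gamma^{-1}\log(2|R|))$. This replaces your ball growth and has no problematic early stage.

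Finally, choose $i\in I$ with both $a_i$ and $b_i$ adjacent to $V(K)$. If there were none, pick for each $i$ an endpoint $x_i$ with $N_G(x_i,R)\subseteq U\cup W$ and count edges into $U\cup W$. This gives $\beta\Delta|I|\le 10|U\cup W|\frac{\beta\gamma^2\Delta}{C\log(2|R|)}\le\frac{140C_0}{C}\beta\Delta|I|$, using $|U\cup W|\le 7|U|/\gamma$ and $|U|\le 2\ell|I|$, which is a contradiction for $C\gg C_0$.

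The factor $\gamma^{-1}$ lost in bounding $|W|$ is exactly what the $\gamma^2$ in the hypothesis on $d_G(v,T)$ pays for. Your Step 1 is this count with $W$ omitted.
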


We first show how to conclude the proof. Define an auxiliary bipartite hypergraph $\mathcal{H}$ on vertex-partition $A\cup B$, where $A$ corresponds to the $k$ pairs $a_i-b_i$, and $B$ corresponds to vertices of $R$, and put an edge $a\cup b$ where $a\in A$ and $b\subseteq B$ if the vertices of $b$ form the internal vertices of a $a_i-b_i$ path of length at most $\ell$. Note that a matching saturating $A$ is exactly what we are looking for. Furthermore, the assumption of the claim translates exactly to the vertex-cover hypothesis of Haxell's theorem, as desired. It remains to check the claim.

\begin{proof}[Proof of claim] Write $n:=|R|$ and $H:=G[R]$. We assume that $C_0$ is a sufficiently large absolute constant and that $C$ is sufficiently large compared with $C_0$. Recall that $\ell=C_0\gamma^{-1}\log(2n)$.

We first give an upper bound on the size of $I$. Using that every vertex occurs at most $10$ times among the endpoints of the pairs, we have
$
2\beta\Delta |I|
\le \sum_{i\in I}\big(d_G(a_i,R)+d_G(b_i,R)\big)
\le 10\sum_{v\in R}d_G(v,T)
\le \frac{10\beta\gamma^2\Delta n}{C\log(2n)}.
$
Hence
$
|I|\le \frac{5\gamma^2 n}{C\log(2n)}.
$
It follows from the assumption on $U$ that
$
|U|\le 2\ell |I|\le \frac{10C_0}{C}\gamma n.
$
In particular, taking $C$ sufficiently large compared with $C_0$, we may assume that $|U|\le\gamma n/100$.

We next find a large expanding subgraph of $H-U$. Let $W\subseteq R\setminus U$ be a set of maximum size, subject to $|W|\le 3n/4$ and
$
e_H\big(W,R\setminus(U\cup W)\big)<\frac{\gamma\Delta}{16}|W|,
$
taking $W=\varnothing$ if there is no non-empty set with this property. We claim that
$
|W|\le \frac{6|U|}{\gamma}.
$
Indeed, if $W\neq\varnothing$, then cut-density, $|W|\le3n/4$, and $pn\ge\gamma\Delta$ give
$
e_H(W,R\setminus W)\ge p|W|(n-|W|)\ge\frac{\gamma\Delta}{4}|W|.
$
Consequently,
$
e_H(W,U)>\frac{3\gamma\Delta}{16}|W|.
$
Since $e_H(W,U)\le\Delta|U|$, the claimed bound follows.

Set $K:=H-(U\cup W)$. We claim that every $S\subseteq V(K)$ with $|S|\le |V(K)|/2$ satisfies
$
e_K(S,V(K)\setminus S)\ge\frac{\gamma\Delta}{16}|S|.
$
Suppose otherwise, and put $W':=W\cup S$. By the bound on $W$ and the fact that $|U|\le\gamma n/100$, we have
$
|W'|\le |W|+\frac{n-|U|-|W|}{2}<\frac{3n}{4}.
$
Moreover,
$
e_H\big(W',R\setminus(U\cup W')\big)
\le e_H\big(W,R\setminus(U\cup W)\big)
+e_K(S,V(K)\setminus S)
<\frac{\gamma\Delta}{16}|W'|,
$
contradicting the maximality of $W$.

As $\Delta(K)\le\Delta$, it follows that
$
|N_K(S)\setminus S|\ge\frac{\gamma}{16}|S|
$
whenever $|S|\le |V(K)|/2$. Thus $K$ is a $\gamma/16$-vertex-expander. By a standard ball growth argument, $K$ has diameter at most $64\gamma^{-1}\log(2n)$. Taking $C_0$ sufficiently large, we may therefore assume that
$
\mathrm{diam}(K)+2\le\ell.
$

It remains to show that some pair has both endpoints adjacent to $K$. Suppose not. For every $i\in I$, choose $x_i\in{a_i,b_i}$ with $N_G(x_i,R)\subseteq U\cup W$. Since each vertex occurs at most $10$ times among the endpoints,
$
\beta\Delta |I|
\le \sum_{i\in I}d_G(x_i,U\cup W)
\le 10\sum_{v\in U\cup W}d_G(v,T).
$
Using $|W|\le6|U|/\gamma$, the hypothesis on $d_G(v,T)$, and $\gamma\le1$, the right hand side is at most
$
10\left(|U|+\frac{6|U|}{\gamma}\right)
\frac{\beta\gamma^2\Delta}{C\log(2n)}
\le \frac{70\beta\gamma\Delta |U|}{C\log(2n)}
\le \frac{140C_0}{C}\beta\Delta |I|.
$
This is a contradiction once $C$ is chosen sufficiently large compared with $C_0$.

Hence, for some $i\in I$, both $a_i$ and $b_i$ have a neighbour in $K$. Let $x\in N_G(a_i)\cap V(K)$ and $y\in N_G(b_i)\cap V(K)$. A shortest $x$--$y$ path in $K$, together with the edges $a_ix$ and $yb_i$, gives an $a_i$--$b_i$ path whose internal vertices lie in $R\setminus U$ and whose length is at most $\mathrm{diam}(K)+2\le\ell$, as required.
\end{proof}
This concludes the proof.\end{proof}

\section{Hamilton routers}
We require the definition of a Hamilton router and an explicit construction as given in Definition~\ref{defn:basic hamilton routers}. Both are borrowed entirely from our previous work with Bowtell, Montgomery, and Pokrovskiy \cite{bowtell2026proof}. Sorting networks, as previously used in the work of Hyde et al. \cite{hyde2025spanning}, could be used for the construction as well. However, this latter construction is more complicated than we need for our purposes here, and costs additional logarithmic factors.

\begin{defn}[Hamilton router] Suppose $A$ and $B$ are disjoint subsets of $V(S)$ with $|A|=|B|$ for some graph $S$. We call $S$ an $A,B$\textit{-Hamilton-router} if, for every bijection $\phi\,\colon A\to B$, $V(S)$ can be partitioned into a collection of paths, $\mathcal{P}_\phi$, with endpoints in $A\times B$ so that, letting $M_\phi$ be the matching $\{(\phi(a),a)\colon a\in A\}\subseteq B\times A$, the edge-set $E(\mathcal{P}_\phi)\cup M_\phi$ forms a Hamilton cycle over $V(S)$. The \textit{order} of an $A,B$-Hamilton-router is $|A|=|B|$ and the \emph{depth} of an $A,B$-Hamilton-router is $|V(S)|/|A|-1$ (i.e., the average length of a component path in any path partition $\mathcal{P}_\phi$).
\end{defn}

\noindent \textbf{Remark.} Throughout the paper, we will refer to Hamilton routers of order 2 as \emph{comparators}. \vspace{2mm}  

\par The following proposition, whose proof is immediate by definition, explains why Hamilton routers are useful.
\begin{prop}\label{prop:basic}
    Let $D$ be a directed graph containing an $A,B$-Hamilton-router $S$. Suppose that there exists a directed path forest $\mathcal{P}$ with exactly $|A|=|B|$ paths, start-vertices in $B$, end-vertices in $A$, and internal vertices partitioning $V(D)\setminus V(S)$. Then, $D$ has a directed Hamilton cycle.
\end{prop}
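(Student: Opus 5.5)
The plan is to build the cycle directly from the router's defining property, applied to the bijection $\phi\colon A\to B$ that is determined by the path forest $\mathcal{P}$. Every path in $\mathcal{P}$ starts in $B$ and ends in $A$. There are exactly $|A|=|B|$ such paths, and their vertex sets are disjoint. So each $b\in B$ is the start of exactly one path and each $a\in A$ is the end of exactly one path. I would check that start-vertices are distinct and lie in $B$ and that end-vertices are distinct and lie in $A$; a counting argument then gives the bijections. (A single-vertex path would need $A\cap B\neq\varnothing$, which is impossible since $A$ and $B$ are disjoint, so every path has distinct endpoints.) This lets me define $\phi(a)$ to be the start-vertex of the path in $\mathcal{P}$ that ends at $a$. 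The map is a bijection $A\to B$.

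Next I would apply the definition of an $A,B$-Hamilton-router to this $\phi$. It produces a partition $\mathcal{P}_\phi$ of $V(S)$ into paths with endpoints in $A\times B$, such that $E(\mathcal{P}_\phi)\cup M_\phi$ is a Hamilton cycle on $V(S)$, where $M_\phi=\{(\phi(a),a)\colon a\in A\}$. Now, for each $a\in A$, I replace the edge $(\phi(a),a)$ of $M_\phi$ by the path of $\mathcal{P}$ running from $\phi(a)$ to $a$. Doing this for every $a$ yields a closed walk. Its vertices are those of $V(S)$ together with the internal vertices of the paths in $\mathcal{P}$, and these internal vertices partition $V(D)\setminus V(S)$. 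Each new vertex is therefore inserted exactly once, and no vertex of $V(S)$ is repeated, since the internal vertices of $\mathcal{P}$ avoid $V(S)$. The walk is thus a cycle through every vertex of $D$.

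The only point that needs care is orientation, because $D$ is directed while the router definition is stated for undirected paths. I would read the router as a directed subgraph whose paths in $\mathcal{P}_\phi$ are directed from $A$ to $B$, so that the traversal order is $a\to\cdots\to b$ inside $S$ and then $b=\phi(a')\to\cdots\to a'$ along $\mathcal{P}$. With this reading, the orientations in the two families match, and the concatenation is a directed Hamilton cycle. If the intended reading is undirected, the same argument applies verbatim without the orientation check. I expect this bookkeeping to be the only subtle step; everything else is immediate from the definition.
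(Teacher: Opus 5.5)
Your argument is correct and is exactly the unpacking of the definition that the paper means when it says the proof is immediate. You read off the bijection $\phi$ from the start- and end-vertices of $\mathcal{P}$, invoke the router property for this $\phi$, and replace each virtual edge $(\phi(a),a)$ of $M_\phi$ by the corresponding path of $\mathcal{P}$; your orientation convention (paths of $\mathcal{P}_\phi$ from $A$ to $B$, matching edges from $B$ to $A$) agrees with the paper's $M_\phi\subseteq B\times A$ and endpoints in $A\times B$.
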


\subsection{Constructing Hamilton routers}\label{sec:routers}

We can build Hamilton routers of order $n$ by gluing together Hamilton routers of order $2$, i.e., comparators (as depicted in Figure~\ref{fig:activated}), as was done by Bowtell et al. \cite{bowtell2026proof}.

\begin{defn}\label{defn:basic hamilton routers}
    We call an $A,B$-Hamilton-router $S$ a \textit{basic Hamilton router of order $n$} if $S$ can be constructed as follows. Start with a vertex set $V$ defined on a $4\times n$ grid, that is, $V:=\{v_{i,j}\,\colon (i,j)\in [4]\times [n]\}$. Let $A=\{v_{1,j}\,\colon j\in  [n]\}$ and $B=\{v_{4,j}\,\colon j\in  [n]\}$.
    \begin{itemize}
        \item For every odd $i\in [n-1]$, and for $A'=\{v_{1,i},v_{1,i+1}\}$ and $B'=\{v_{2,i},v_{2,i+1}\}$, add a graph $S^{\textrm{odd}}_i$ which is an $A',B'$-comparator.
        \item For every even $i\in [n-1]$, and for $A'=\{v_{3,i},v_{3,i+1}\}$ and $B'=\{v_{4,i},v_{4,i+1}\}$, add a graph $S^{\textrm{even}}_i$ which is an $A',B'$-comparator.
        \item For every $i\in [n]$, add a path from $v_{2,i}$ to $v_{3,i}$.
        \item Add an edge from $v_{3,1}$ to $v_{4,1}$. If $n$ is even, add an edge from $v_{3,n}$ to $v_{4,n}$, and otherwise add an edge from $v_{1,n}$ to $v_{2,n}$.
    \end{itemize}
\end{defn}

\begin{figure}[h]
    \centering

\begin{tikzpicture}[
    x=1.7cm,
    y=1.25cm,
    straight/.style={line width=1.45pt, line cap=round},
    thinline/.style={line width=0.75pt, line cap=round,black!50},
    wavy/.style={
        line width=1.45pt,
        decorate,
        decoration={snake, amplitude=1.5pt, segment length=7pt},
        line cap=round
    },
    blackvertex/.style={circle, fill=black, draw=black, inner sep=0pt, minimum size=8pt},
    redvertex/.style={circle, fill=red!90!black, draw=none, inner sep=0pt, minimum size=9pt},
    bluevertex/.style={circle, fill=blue!65, draw=none, inner sep=0pt, minimum size=9pt},
    greenvertex/.style={circle, fill=green!70!black, draw=none, inner sep=0pt, minimum size=9pt},
    purplevertex/.style={circle, fill=violet, draw=none, inner sep=0pt, minimum size=9pt}
]

\coordinate (L1) at (0,3);
\coordinate (L2) at (0,2);
\coordinate (L3) at (0,1);
\coordinate (L4) at (0,0);

\coordinate (A1) at (0.8,3);
\coordinate (A2) at (0.8,2);
\coordinate (A3) at (0.8,1);
\coordinate (A4) at (0.8,0);

\coordinate (B1) at (1.6,3);
\coordinate (B2) at (1.6,2);
\coordinate (B3) at (1.6,1);
\coordinate (B4) at (1.6,0);

\coordinate (R1) at (2.4,3);
\coordinate (R2) at (2.4,2);
\coordinate (R3) at (2.4,1);
\coordinate (R4) at (2.4,0);

\draw[straight] (L1) -- (A1);
\draw[straight] (L2) -- (A2);
\draw[thinline] (L1) -- (A2); 
\draw[thinline] (L2) -- (A1); 

\draw[thinline] (L3) -- (A3); 
\draw[thinline] (L4) -- (A4); 
\draw[straight] (L3) -- (A4);
\draw[straight] (L4) -- (A3);

\draw[wavy] (A1) -- (B1);
\draw[wavy] (A2) -- (B2);
\draw[wavy] (A3) -- (B3);
\draw[wavy] (A4) -- (B4);

\draw[straight] (B1) -- (R1);
\draw[straight] (B4) -- (R4);

\draw[straight] (B2) -- (R2); 
\draw[straight] (B3) -- (R3); 
\draw[thinline] (B2) -- (R3); 
\draw[thinline] (B3) -- (R2); 

\node[redvertex]    at (L1) {};
\node[bluevertex]   at (L2) {};
\node[greenvertex]  at (L3) {};
\node[purplevertex] at (L4) {};

\node[greenvertex]  at (R1) {};
\node[redvertex]    at (R2) {};
\node[bluevertex]   at (R3) {};
\node[purplevertex] at (R4) {};

\node[blackvertex] at (A1) {};
\node[blackvertex] at (A2) {};
\node[blackvertex] at (A3) {};
\node[blackvertex] at (A4) {};

\node[blackvertex] at (B1) {};
\node[blackvertex] at (B2) {};
\node[blackvertex] at (B3) {};
\node[blackvertex] at (B4) {};

\node[anchor=east] at (-0.18,3) {$1$};
\node[anchor=east] at (-0.18,2) {$4$};
\node[anchor=east] at (-0.18,1) {$2$};
\node[anchor=east] at (-0.18,0) {$3$};

\node[anchor=west] at (2.58,3) {$2$};
\node[anchor=west] at (2.58,2) {$1$};
\node[anchor=west] at (2.58,1) {$4$};
\node[anchor=west] at (2.58,0) {$3$};

\end{tikzpicture}
    \caption{The above figure and following caption is borrowed from \cite{bowtell2026proof}. A basic Hamilton router of order $4$ where (for simplicity) the comparators are depicted as $4$-cycles. $A$ and $B$ are the leftmost and rightmost columns of vertices, respectively. An example bijection between $A$ and $B$ is indicated by the colour/number of the vertices. Choices of path systems for each comparator are indicated with thick lines, chosen so that identifying the vertex pairs of the same colour/number forms a cycle in the thick lines. }
    \label{fig:activated}
\end{figure}

The short argument justifying why the above construction is a valid Hamilton router is given in \cite{bowtell2026proof}.
\begin{prop}[\cite{bowtell2026proof}]\label{prop:routers are good} Basic Hamilton routers are well defined, that is, every (directed) graph constructed as in Definition~\ref{defn:basic hamilton routers} is an $A,B$-Hamilton-router.
\end{prop}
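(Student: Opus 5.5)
The plan is to reduce the Hamilton-router property to a statement about permutations, and then choose the comparator settings greedily so that the resulting permutation is a single cycle.

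\textbf{Step 1: what a comparator can do.} First I record what a single comparator offers. Let $S'$ be an $A',B'$-comparator with $A'=\{x_1,x_2\}$ and $B'=\{y_1,y_2\}$, and apply its defining property to the two bijections $A'\to B'$. Suppose the paths in the resulting partition run $x_1\to y_{\pi(1)}$ and $x_2\to y_{\pi(2)}$. For these two paths together with the matching to close into one cycle, $\pi$ must be the pairing opposite to the bijection used. Hence taking both bijections shows that $V(S')$ can be covered by two vertex-disjoint paths joining $A'$ to $B'$ either \emph{straight} ($x_j\to y_j$) or \emph{crossed} ($x_j\to y_{3-j}$), at our choice.

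\textbf{Step 2: reduction to permutations.} Fix a setting (straight or crossed) for every comparator in Definition~\ref{defn:basic hamilton routers}. Concatenate the comparator paths with the $v_{2,i}$--$v_{3,i}$ paths and with the boundary edges at columns $1$ and $n$. This gives $n$ vertex-disjoint paths covering $V(S)$, each running from $v_{1,j}$ to $v_{4,\sigma(j)}$. Here $\sigma=\sigma_2\sigma_1$, where:
\begin{itemize}
\item $\sigma_1$ is the product of the transpositions $(i\;i+1)$ over those odd $i$ whose comparator is crossed;
\item $\sigma_2$ is the analogous product over even $i$.
\end{itemize}
Each layer consists of disjoint transpositions, so the order within a layer is irrelevant. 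The boundary edges realise the identity on the columns not covered by a comparator in that layer. Now identify $B$ with $[n]$ via $v_{4,j}\leftrightarrow j$ and $A$ likewise, and let $\phi$ correspond to a permutation $f$ of $[n]$. Following the cycle $E(\mathcal P)\cup M_\phi$ from $v_{1,j}$, one reaches $v_{1,f^{-1}(\sigma(j))}$. Therefore this edge set is a Hamilton cycle of $V(S)$ if and only if $\tau\sigma$ is an $n$-cycle, where $\tau=f^{-1}$. So it suffices to show the following: for every permutation $\tau$ of $[n]$, one can choose subsets of odd and even adjacent transpositions so that $\tau\,\sigma_2\sigma_1$ is a single $n$-cycle. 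The bookkeeping of which side we multiply on does not matter, since conjugating or inverting preserves being an $n$-cycle. I would fix it as $\pi\mapsto \pi\circ(i\;i+1)$.

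\textbf{Step 3: greedy choice of transpositions.} The key fact is that multiplying a permutation by a transposition $(x\;y)$ merges the cycles containing $x$ and $y$ if they are distinct, and splits that cycle otherwise. Start from $\pi_0=\tau$ and consider the transpositions $(i\;i+1)$ in the order of application: first the odd $i$ (applied first, as $\sigma_1$ acts first), then the even $i$. At each step, apply $(i\;i+1)$ (that is, set that comparator to crossed) precisely when $i$ and $i+1$ currently lie in different cycles; otherwise leave it straight. Every applied transposition is a merge, so the partition of $[n]$ into cycles only ever coarsens. At the moment $(i\;i+1)$ is considered, $i$ and $i+1$ either already share a cycle or become joined. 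Hence in the final permutation every consecutive pair $i,i+1$ lies in one cycle, so it is a single $n$-cycle. Within a layer the transpositions commute, so processing them sequentially is legitimate, and the decisions made define $\sigma_1$ and $\sigma_2$.

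\textbf{Main obstacle.} The only delicate point is Step 2: verifying carefully that the concatenated paths really partition $V(S)$, and that the cycle condition becomes exactly ``$\tau\sigma$ is an $n$-cycle'' with the correct orientation. This includes the parity-dependent boundary edges, the odd-$n$ edge $v_{1,n}v_{2,n}$ in the first layer, and the edges at columns $1$ and $n$ in the second. Once that translation is checked, Step 3 is a routine merging argument, and the directed version follows identically.
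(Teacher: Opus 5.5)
The paper does not prove this proposition itself. It defers to \cite{bowtell2026proof}, and only the caption of Figure~\ref{fig:activated} hints at the idea: choose a path system in each comparator so that the paths, together with the identifications given by $\phi$, close up into a single cycle. Your argument is a correct, self-contained proof of exactly this.

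\begin{itemize}
\item Step~1 correctly extracts the straight and crossed states from the order-$2$ router property.
\item Step~2 correctly reduces the problem to making $\tau\sigma_2\sigma_1$ an $n$-cycle.
\item The merge-only greedy in Step~3 works. Each adjacent transposition $(i\;i+1)$ with $i\in[n-1]$ is available in exactly one of the two layers, and the transpositions within a layer are disjoint and commute.
\end{itemize}

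The only slip is bookkeeping. With $\pi\mapsto\pi\circ(i\;i+1)$ and the odd layer processed first, you actually build $\tau\sigma_1\sigma_2$, not $\tau\sigma_2\sigma_1$. There are two easy fixes. You can process the even layer first, since the greedy argument does not depend on the order. Alternatively, as your remark suggests, run it on $\tau^{-1}$ and note that $(\tau^{-1}\sigma_1\sigma_2)^{-1}=\sigma_2\sigma_1\tau$ is conjugate to $\tau\sigma_2\sigma_1$.
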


\subsection{Building comparators out of even cycles}

We want to embed Hamilton routers in mild expanders, so in particular, we need to be able to find comparators in such expanders. Expanders can have arbitrarily large girth, so $C_4$ is not a valid candidate for a comparator. Below we give a construction of comparators that can be built out of a single even cycle, followed by adding internally vertex-disjoint paths connecting various vertices of this base cycle. This eventually lets us find comparators with about $\log(n)^2$ vertices in expanders (after passing through Lemma~\ref{lem:cut-dense-connector}). The idea is basically borrowed from \cite{hyde2025spanning}, with the difference being that there is no modular restriction on the length of the even cycle. The construction we give here is also arguably simpler (but does not have the property of keeping the path lengths between the vertices the same, which is useful whilst embedding spanning trees).

\begin{lemma}\label{lemma: comparators exist}
    For every even $k\geq 4$, there exists a graph $G_k$ which is an $A,B$-Hamilton-router, where $A,B\subseteq V(G_k)$ are disjoint sets with $|A|=|B|=2$, i.e. $G_k$ is a comparator (recall Remark 1), and furthermore, $G_k$ has the following properties.
    \begin{enumerate}
        \item $\Delta(G_k)\leq 3$
        \item $G_k$ can be obtained by starting with a $k$-cycle $C$, identifying some $A,B\subseteq V(C)$, appropriately pairing up the vertices of $V(C)\setminus (A\cup B)$, and, for each pair, adding a path of arbitrary length between its two vertices, so that the added paths are pairwise vertex-disjoint and each meets $C$ precisely in its two endpoints.
    \end{enumerate}
\end{lemma}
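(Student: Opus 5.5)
The plan is to write down one explicit base pattern, check by hand that it is a comparator, and then find a length-adjusting gadget so that every even $k\ge4$ is covered. For every $k$ I will take the cycle $C$ with four terminal vertices $a_1,b_1,a_2,b_2$, set $A=\{a_1,a_2\}$ and $B=\{b_1,b_2\}$, and pair the remaining $k-4$ cycle vertices by chord paths. Each cycle vertex lies on at most one chord, and terminals lie on none, so $\Delta(G_k)\le3$ holds automatically. That settles property 1, and property 2 holds by construction. What remains is the router property.

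First I unpack the router property for order $2$. For a bijection $\phi$, closing the path system with the matching $M_\phi$ must give a single cycle. Hence the path starting at $a_1$ must end at $\phi(a_2)$, not at $\phi(a_1)$, and similarly for $a_2$. So the router property for $|A|=|B|=2$ is equivalent to two spanning path systems: one pairing $a_1$--$b_1$ and $a_2$--$b_2$, and one pairing $a_1$--$b_2$ and $a_2$--$b_1$. Two structural facts make checking easy.
\begin{itemize}
\item Every interior vertex of a chord path has degree $2$, so any spanning path system traverses each chord path in full, whatever its length.
\item Each terminal is an endpoint, so it has degree $1$ in the path system.
\end{itemize}
Consequently the verification is independent of the chord lengths, and it is a finite check on the cycle with chords contracted to edges.

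The two base cases are as follows.
\begin{itemize}
\item For $k=4$, take $C=a_1b_1a_2b_2$ with no chords. The edges $\{a_1b_1,a_2b_2\}$ give one system and $\{a_1b_2,a_2b_1\}$ give the other; this is exactly the $C_4$ comparator of Figure~\ref{fig:activated}.
\item For $k=8$, take the cycle $a_1\,x\,b_1\,y\,a_2\,z\,b_2\,w$ with chord paths $P_{xy}$ and $P_{zw}$. The paths $a_1xP_{xy}yb_1$ and $a_2zP_{zw}wb_2$ give the first system. The paths $b_1xP_{xy}ya_2$ and $a_1wP_{zw}zb_2$ give the second. Both systems use every cycle vertex and both chords.
\end{itemize}

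For general $k$ the idea is to lengthen this pattern. I would replace each of the four non-terminal vertices $x,y,z,w$ by a segment of the cycle and pair the segment vertices by a family of chords. These chords should be nested, like ladder rungs between the segments $X$ and $Y$, and between $Z$ and $W$. The goal is that the walk entering $X$ from $a_1$ (first system) and the walk entering $X$ from $b_1$ (second system) can each zigzag through $X$, the rungs and $Y$, and leave at the correct terminal. I would first try to make this work with rungs $x_i$--$y_i$ and segment lengths of a fixed parity, checking both systems by induction on the number of rungs. After that I would look for an additional local insertion that changes $k$ by exactly $2$, so that all even values are reached.

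The main obstacle is this general-$k$ gadget, and in particular the small case $k=6$. With only two non-terminals $u,v$, the chord $uv$ must be traversed as $t\,u\,P_{uv}\,v\,t'$. A direct case check suggests that only one of the two required pairings is realisable in this situation. So before committing to the ladder construction, I would recheck the $k=6$ case carefully, including all placements of the terminals on $C$. If it really fails, the argument would have to be repaired, for instance by restricting to $k\in\{4\}\cup\{k\ge 8\}$, which suffices for the later applications since the cycles used there are long. Alternatively the repair might come from a reading of the definition that I am missing.
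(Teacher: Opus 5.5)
There is a genuine gap: the proposal proves the lemma only for $k=4$ and $k=8$. Your checks of those two cases are correct. For every other even $k$ you give a plan rather than a proof. The ``ladder'' pairing is never specified, neither path system is verified for it, and the insertion that changes $k$ by $2$ is not described.

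Your suspicion that $k=6$ fails is also false, and it points to the idea you are missing. You always separate consecutive terminals by non-terminal vertices; instead, place three terminals consecutively. On $C=c_0c_1c_2c_3c_4c_5c_0$ take $A=\{c_0,c_2\}$, $B=\{c_1,c_4\}$ and a single chord path $Q$ from $c_3$ to $c_5$. Then $\{c_0c_1,\ c_2c_3Qc_5c_4\}$ and $\{c_1c_2,\ c_4c_3Qc_5c_0\}$ are the two required path systems. In each, one of the two paths is a single cycle edge between adjacent terminals. Your fallback of excluding $k=6$ would not be harmless either. Lemma~\ref{lem:cycles} bounds the cycle lengths above by $3\log n$ but gives no lower bound, so $6$-cycles may well be among those used in Lemma~\ref{lem:random-router}.

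The general case becomes a uniform check once you push your two structural facts one step further. In any admissible path system, every vertex of $C$ uses \emph{exactly one} edge of $C$: a terminal has degree $1$ and lies on no chord, and a non-terminal has degree $2$ and must use its chord. So the cycle edges used form a perfect matching of the even cycle $C$, i.e.\ one of its two perfect matchings $M_0\ni c_0c_1$ and $M_1\ni c_1c_2$.

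With $A=\{c_0,c_2\}$ and $B=\{c_1,c_4\}$, the router property therefore reduces to one requirement. You need a perfect matching $P$ of $V(C)\setminus(A\cup B)$ such that $M_0\cup P$ and $M_1\cup P$ are both acyclic. The degree-one vertices then force the pairings $\{c_0c_1,c_2c_4\}$ and $\{c_1c_2,c_0c_4\}$, which are the two different $A$--$B$ pairings.

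For $k=2r\ge 6$ the paper takes $P=\{c_{2j+1}c_{2j+4}:1\le j\le r-3\}\cup\{c_{2r-3}c_{2r-1}\}$. One checks that $M_1\cup P$ is the edge $c_1c_2$ together with the path $c_0c_{2r-1}c_{2r-3}c_{2r-2}c_{2r-5}c_{2r-4}\cdots c_3c_4$. Similarly, $M_0\cup P$ is the edge $c_0c_1$ together with a path from $c_2$ that climbs through $c_3,c_6,c_7,c_{10},c_{11},\dots$, crosses via the chord $c_{2r-3}c_{2r-1}$, and descends through the remaining vertices to $c_5c_4$.

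Your ladder-plus-gadget route might be made to work. As written, though, it supplies neither the construction nor the verification, so it does not establish the lemma for any $k\notin\{4,8\}$.
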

\begin{proof}
    Property $2$ in the lemma statement essentially gives away the construction of the comparator, save for how to identify $A$ and $B$ along the base cycle, and how to pair up the vertices of $V(C)\setminus (A\cup B)$. For both, there are plenty of choices that work, here we give an arbitrary one. 
    \par Let $C$ be a $k$-cycle. If $k=4$, there is nothing to do, as this already forms a comparator. Otherwise, enumerate the vertices as $c_0c_1c_2c_3c_4\cdots c_{k-1}c_0$. Let $A=\{c_0,c_2\}$ and $B=\{c_1, c_4\}$. Let $M_0$ be the perfect matching of $C$ matching $c_0$ to $c_2$, and let $M_1$ be the other perfect matching (which pairs $c_1$ with $c_2$). 

    \begin{claim}
        There exists a perfect matching $P$ of $V(C)\setminus (A\cup B)$ so that $M_0\cup P$ and $M_1\cup P$ are both acyclic.
    \end{claim}

    Assuming the claim, $P$ gives us the necessary pairing, and we argue that any $G_k$ obtained by subdividing the edges of $P$ (an arbitrary number of times) forms a valid $A,B$-Hamilton-router. Indeed,  $M_0\cup P$ and $M_1\cup P$ are both spanning subgraphs where each vertex has degree $1$ or $2$ so both subgraphs are a union of cycles and paths, and there cannot be cycles by the claim, so just paths. The vertices in $A\cup B$ are the only vertices of degree $1$, so these are the endpoints of the paths. As $M_0\cup P$ has a path connecting $c_0$ and $c_1$, and $M_1\cup P$ has a path connecting $c_1$ and $c_2$, it is clear that in each case, the remaining path is between the correct pair of vertices ($c_2-c_4$ and $c_0-c_4$, respectively), verifying the definition of a $A,B$-Hamilton-router.
    \par The claim itself can be verified by giving an explicit pairing. For example, $$P=\{c_{2j+1}c_{2j+4}:1\le j\le r-3\}\cup\{c_{2r-3}c_{2r-1}\}$$
    is a pairing that does not create any cycles when combined with $M_0$ or $M_1$, see Figure~\ref{fig:C10}. 
\end{proof}
\begin{figure}[h]
    \centering
    \includegraphics[width=0.34\linewidth]{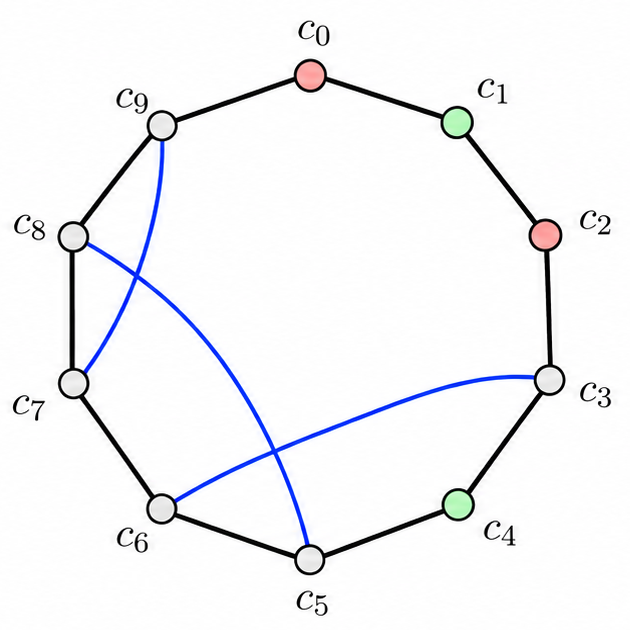}
    \caption{The graph $G_k$ constructed in Lemma~\ref{lemma: comparators exist} when $k=10$. The blue edges indicate the pairing given by $P$. Any graph in which the blue edges are subdivided, i.e. replaced by paths of arbitrary length with the same endpoints,  forms a valid comparator.}
    \label{fig:C10}
\end{figure}

\subsection{Finding even cycles}
To use the previous comparator construction, we need to be able to find many even length cycles in a given graph. We do not need expansion for this, just a degree bound. In particular, the following lemma can be proved easily by iterating the simple Moore bound, bounding the number of vertices of a graph in terms of its average degree and its girth, as given by a breadth-first search argument, see \cite[Theorem~1]{AlonHooryLinial}. 
\begin{lemma}[Packing short even cycles]\label{lem:cycles} Let $G$ be an $n$-vertex graph with $D\leq \delta(G)$ and $\Delta(G)\leq 2D$ for some $D\geq 100$. Then, $G$ contains at least $n/(100\log n)$ vertex-disjoint even-length cycles each of length at most $3\log n$. 
\end{lemma}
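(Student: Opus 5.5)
The plan is a greedy extraction argument driven by the Moore bound. We repeatedly find a short even cycle in the current graph, delete its vertices, and continue, stopping once about $n/2$ vertices have been removed. Each cycle has length at most $3\log n$, so we can perform at least $(n/2)/(3\log n)\ge n/(100\log n)$ iterations. The whole proof therefore reduces to a single claim.

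\textbf{Key claim.} If $H\subseteq G$ is obtained by deleting a set $X$ with $|X|\le n/2$, then $H$ contains an even cycle of length at most $3\log n$.

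The difficulty is that deleting $X$ can destroy minimum degree locally, so we cannot use $\delta(G)\ge D$ directly on $H$. We use average degree instead. Since $\Delta(G)\le 2D$, at most $2D|X|$ edges meet $X$. Meanwhile $e(G)\ge Dn/2$. Deleting $X$ of size at most $n/2$ could in principle remove essentially all edges, so the threshold needs adjusting. We stop once $|X|\ge n/8$ instead. Then at most $2D\cdot n/8=Dn/4$ edges are lost, so $e(H)\ge Dn/4$ and $H$ has average degree at least $D/2\ge 50$.

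This threshold still leaves enough iterations: $(n/8)/(3\log n)\ge n/(100\log n)$ holds with room to spare.

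\textbf{Finding the short even cycle.} First pass to a subgraph $H'\subseteq H$ with minimum degree at least $D/4\ge 25$, using the standard deletion of low-degree vertices. Then run a breadth-first search from a vertex $v$ of $H'$ up to depth $r=\lceil \log n\rceil$, using a bipartite-style Moore argument:

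\begin{itemize}
\item If the BFS ball contains no cycle, it grows by a factor of at least $24$ per layer, so depth $\log n$ already exceeds $n$ vertices. Hence some cycle of length at most $2r+1$ exists.
\item That cycle may be odd, so we need an even one. Consider edges inside a single BFS layer versus edges between layers.
\item If the graph restricted to layers up to $r$ had no even cycle of length at most $3\log n$, then every block of the BFS structure is an edge or an odd cycle.
\item In that case each vertex has at most two ``back or sideways'' edges, so with minimum degree at least $25$ the layers still grow by a factor of at least $20$.
\item This growth contradicts $|V(H')|\le n$ within $\log n$ layers.
\end{itemize}

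Cleaner alternative: use the known fact that a graph with no even cycle of length at most $2k$ has average degree at most $O(n^{1/k})$. This is the Bondy–Simonovits/Moore-type bound cited from \cite{AlonHooryLinial}, iterated as the paper suggests. Applying it with $k=\tfrac32\log n$ shows that average degree at least $25$ forces an even cycle of length at most $3\log n$.

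\textbf{Main obstacle.} The delicate step is the parity issue: guaranteeing an even cycle rather than just a short cycle. I would first try to handle it via the cactus-structure argument above, since a graph with no even cycles has all blocks edges or odd cycles and hence average degree less than $3$. Applying this to a depth-limited BFS neighbourhood gives the needed expansion contradiction. Constants are generous, since $D\ge100$ and logarithm factors are loose.
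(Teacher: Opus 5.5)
Your extraction scheme is fine and is essentially the paper's maximal-collection argument. While $|X|<n/8$, at most $2D|X|<Dn/4$ edges meet $X$, so $G-X$ has average degree at least $D/2$, and each extracted cycle costs at most $3\log n$ vertices.

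\textbf{The gap is in the parity step}, which you rightly flag as the crux. The inference ``if the layers up to $r$ contain no even cycle of length at most $3\log n$, then every block is an edge or an odd cycle'' does not follow. The cactus characterisation needs the absence of \emph{all} even cycles. Your hypothesis says nothing about even cycles longer than $3\log n$ inside the ball. For instance, two odd fundamental cycles of length about $2r$ sharing one edge have an even cycle of length about $4r$ as their symmetric difference. Your ``Main obstacle'' paragraph relies on the same invalid step.

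\textbf{How to repair it.} The conclusion you actually use is true, but it needs a local argument. Suppose $u\in L_i$ with $i\le r$ had two distinct neighbours $w_1,w_2$ in the same layer $L_\ell$, $\ell\in\{i-1,i\}$. Then $uw_1$, $uw_2$ and the BFS-tree paths from $w_1,w_2$ up to their last common ancestor $z$ form a cycle of length $2+2\,d_T(w_1,z)\le 2i+2$, which is even. Hence, if $H'$ has no even cycle of length at most $2r+2$, every vertex of $L_i$ has at most one neighbour in each of $L_{i-1}$ and $L_i$. So $|L_{i+1}|\ge(\delta(H')-2)|L_i|\ge 23|L_i|$ for $i<r$. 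With $r=\lceil\log_{23}n\rceil+1$ this contradicts $|V(H')|\le n$, and $2r+2\le 3\log n$ since $n>D\ge 100$. With this replacement your proof is complete.

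\textbf{The cleaner alternative and the paper's route.} Your ``cleaner alternative'' is not justified as stated either. The Alon--Hoory--Linial bound is a girth bound, so it needs \emph{all} short cycles to be absent, not just the even ones. Moreover, with $k=\frac32\log n$ the quantity $n^{1/k}$ is an absolute constant, so the constant hidden in $O(n^{1/k})$ is exactly what decides the matter.

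The paper's proof sidesteps parity entirely. It first passes to a spanning bipartite subgraph $H$ given by a maximum cut, which keeps $e(H)\ge Dn/4$ and $\Delta(H)\le 2D$. It then takes a maximal family of vertex-disjoint cycles of length at most $3\log n$ in $H$. If the family had fewer than $n/(100\log n)$ cycles, the leftover graph would have average degree above $19D/50\ge 38$. The irregular Moore bound then gives a cycle of length below $3\log n$, which is automatically even because $H$ is bipartite. This max-cut trick is also exactly what would make your cleaner alternative rigorous.

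The trade-off between the two routes: the paper's costs a factor $2$ in the edge count but needs no BFS layer analysis. Your repaired BFS argument is self-contained and avoids citing the irregular Moore bound.
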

\begin{proof}
Let $H$ be a spanning bipartite subgraph of $G$ obtained from a maximum cut. Then $e(H)\ge e(G)/2\ge Dn/4$ and $\Delta(H)\le2D$. Let $\mathcal C$ be a maximal collection of vertex-disjoint cycles in $H$, each of length at most $3\log n$, and suppose that $|\mathcal C|<n/(100\log n)$. If $X$ is the union of these cycles, then $|X|<3n/100$, and hence
$$
e(H-X)\ge e(H)-2D|X|>\frac{Dn}{4}-\frac{6Dn}{100}=\frac{19Dn}{100}.
$$
Thus $H-X$ has average degree greater than $19D/50\ge38$. By the Moore bound for irregular graphs \cite[Theorem~1]{AlonHooryLinial}, an $N$-vertex graph of average degree at least $38$ contains a cycle of length at most
$
2\left\lceil\frac{\log(N/2)}{\log 37}\right\rceil+2<3\log n.
$
Since $H-X$ is bipartite, this cycle is even, contradicting the maximality of $\mathcal C$. Hence $|\mathcal C|\ge n/(100\log n)$.
\end{proof}

\subsection{Finding Hamilton routers in mild expanders}

We can now combine our tools to give an embedding of a Hamilton router inside a random reservoir within an expander graph. 
\begin{lemma}[Random subsets of expanders contain Hamilton routers]
\label{lem:random-router}
There exist absolute constants $c>0$, $K>1$ and $n_0$ such that the following holds. Let $H$ be an $(n,d,\lambda)$-graph with $n\ge n_0$ and $\lambda\le (1-\delta)d$ for some $0<\delta\le 1/2$. Let $0<q\le \delta/100$ with $qn\in 2\mathbb{N}$, and suppose that $qd\ge K\delta^{-4}(\log n)^2.$ Let $R\subseteq V(H)$ be chosen among the $qn$-element subsets of $V(H)$ uniformly at random. Then, with probability at least $1-1/n$, the following holds simultaneously for all disjoint sets $A,B\subseteq V(H)\setminus R$ with $|A|=|B|=k$.

If every $x\in A\cup B$ satisfies $d_H(x,R)\ge \frac{qd}{2},$ every $v\in R$ satisfies $d_H(v,A\cup B)\le c\frac{\delta^3qd}{\log n},$ and it holds that $k\le c\delta^3|R|(\log n)^{-2}$, then $H[A\cup B\cup R]$ contains an $A,B$-Hamilton router $S$ such that $A\cup B\subseteq V(S)\subseteq A\cup B\cup R.$
\end{lemma}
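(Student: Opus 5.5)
We split the random reservoir $R$ into two parts and use them for two different jobs. The first part supplies the short even cycles that will serve as skeletons of comparators. The second part supplies the connecting paths, found with Lemma~\ref{lem:cut-dense-connector}. The target is a basic Hamilton router of order $k$ in the sense of Definition~\ref{defn:basic hamilton routers}, which is a valid $A,B$-Hamilton-router by Proposition~\ref{prop:routers are good}. Its comparators will be copies of the graphs $G_{k'}$ from Lemma~\ref{lemma: comparators exist}.

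\textbf{Step 1: random structure of $R$.} Choose $R$ uniformly. Take $R$ to be $U\cup V$ for a uniformly random ordered pair of disjoint sets $(U,V)$, each of size $qn/2$, where $V$ is further split into halves $V_1,V_2$ if convenient. By Chernoff and a union bound, with probability $1-o(1/n)$ every vertex of $H$ has $(1\pm 0.1)\,qd/2$ neighbours in each of $U$ and $V$. Lemma~\ref{lem:main} with $a=2$ and $p=q/2$ gives $s_2(A[U,V])\le(1-\delta/2)qd/2$. The hypothesis $D=qd/2\ge C_a\delta^{-2}\log n$ holds because $qd\ge K\delta^{-4}(\log n)^2$.

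Viewing $H[U,V]$ as a bipartite graph with almost-regular degrees, a bipartite analogue of Lemma~\ref{lemma: spectral cut density} applies. Its normalized Laplacian has second eigenvalue gap $\gtrsim\delta$, so Cheeger-type bounds show that $H[U\cup V]$ is $p'$-cut-dense with $p'|R|\ge\gamma\Delta$ and $\gamma\asymp\delta$. Proving this bipartite version is where I would work through the algebra carefully. If I need a slightly weaker statement, I would only require edge expansion $e(S,R\setminus S)\ge c\delta\, qd\,|S|$ for $|S|\le|R|/2$, which is what the proof of Lemma~\ref{lem:cut-dense-connector} actually uses. Since all quantifiers over $A,B$ come after this random choice, every subsequent step is deterministic.

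\textbf{Step 2: cycles and a skeleton.} The basic router of order $k$ needs $k-1$ comparators plus $O(k)$ linking paths. Apply Lemma~\ref{lem:cycles} to $H[U]$, whose minimum degree is at least $D$ and maximum degree at most $2D$. This gives at least $|U|/(100\log n)\gg k$ vertex-disjoint even cycles of length at most $3\log n$. If a cycle has length $2$ or is too short, discard it. For the rest, use length $\ge4$ by taking cycles in the bipartite max-cut subgraph, which have length at least $4$.

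Assign one cycle to each comparator slot $S_i^{\rm odd},S_i^{\rm even}$. For each comparator, fix the designated $A',B'$ vertices on its cycle, as in Lemma~\ref{lemma: comparators exist}. Each comparator then requires fewer than $3\log n$ pairs to be joined by paths (the pairing $P$). The router additionally requires paths from $v_{2,i}$ to $v_{3,i}$, plus the few extra edges in Definition~\ref{defn:basic hamilton routers}. It also requires paths linking $A$ and $B$ to the corresponding grid vertices, namely $v_{1,j}$ and $v_{4,j}$. So we can identify $v_{1,j}$ with $a_j\in A$ and $v_{4,j}$ with $b_j\in B$, and then add connecting paths. Alternatively, we insert an extra connection path from $a_j$ to the comparator vertex; subdividing edges keeps the structure a router.

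In total there are $O(k\log n)$ pairs, with endpoints $T$ consisting of $A\cup B$ and $O(k\log n)$ cycle vertices in $U$. Every vertex occurs at most a bounded number of times, which is at most $10$.

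\textbf{Step 3: connecting.} Apply Lemma~\ref{lem:cut-dense-connector} with reservoir $V$, or with $V$ together with the unused parts of $U$, to obtain internally disjoint paths realising all pairs. This completes the router inside $A\cup B\cup R$.

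The hypotheses of Lemma~\ref{lem:cut-dense-connector} need checking. Endpoints in $A\cup B$ have degree at least $qd/2$ into $R$; to get degree $\ge\beta\Delta$ into $V$, I would require $R$-degree into $V$ specifically, or use $R$ itself as the connection reservoir after removing the used cycles. Cycle vertices have degree $\approx qd/2$ into $V$ by Chernoff. The main obstacle is the condition $d(v,T)\le\beta\gamma^2\Delta/(C\log n)$ for $v\in V$. For the $A\cup B$ part this is exactly the hypothesis $c\delta^3qd/\log n$, with $\gamma\asymp\delta$.

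For the cycle vertices, $T\cap U$ is arbitrary of size $O(k\log n)$ and there are polynomially many choices, so a union bound over $T$ is not available. I would handle this by choosing the cycles so that the load is spread. Only $O(k\log n)\le c\delta^3|R|/\log n$ cycle vertices are used, out of a pool of $|U|$. I would select the used cycles greedily or at random from the packing, so that every $v\in V$ sees at most $O(1+ qd\cdot k\log n/|U|)\le c'\delta^3qd/\log n$ of them; here $qd\ge K\delta^{-4}(\log n)^2$ makes this at least a constant. A random choice among the $\Omega(|U|/\log n)$ cycles, combined with Chernoff over $v$, gives the bound. The length bound on the paths, $O(\delta^{-1}\log n)$, is harmless.
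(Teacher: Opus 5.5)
\textbf{There is a genuine gap in Step 3, exactly where you flag the main obstacle.} Your fix is to pick $k-1$ cycles at random from a fixed packing in $H[U]$ and apply Chernoff. This gets the expectation right, $\mathbb{E}\, d(v,T\cap U)=O(c\delta^3qd/\log n)$, but not the concentration.

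\begin{itemize}
\item The summands are $d(v,Z)$ over cycles $Z$, and each can be as large as $|Z|=\Theta(\log n)$. You have no control over the packing produced by Lemma~\ref{lem:cycles}. A mild spectral gap allows very dense neighbourhoods, so nothing prevents the $\approx qd/2$ neighbours of $v$ in $U$ from being covered by $\Theta(qd/\log n)$ of your cycles.
\item Lemma~\ref{lem:cut-dense-connector} needs the threshold $\tau=\Theta(\delta^2qd/\log n)$. With summands of size $\Theta(\log n)$, the tail bound is only $\exp(-\Theta(\tau/\log n))=\exp(-\Theta(\delta^2qd/(\log n)^2))$.
\item Under the hypothesis $qd\ge K\delta^{-4}(\log n)^2$, this is merely $\exp(-\Theta(K\delta^{-2}))$, a constant. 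So you cannot union bound over the $\Theta(qn)$ reservoir vertices. Nor can you union bound over the vertices of $A\cup B$, whose degree into the reservoir must also survive deleting the used cycles.
\item Your route would need roughly an extra $\log n$ factor in the hypothesis on $qd$. The ``greedy'' alternative is not backed by any argument.
\end{itemize}

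There is a related problem with the reservoir. Lemma~\ref{lem:main} only tells you about the bipartite graph $H[U,V]$, so $V$ alone is not known to be cut-dense. Using $R$ minus the used cycles as the reservoir needs precisely this unproved load bound in order to keep cut-density.

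\textbf{The missing idea is to fix the cycle pool at the vertex level, before looking for any cycles.}

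\begin{itemize}
\item Set aside a separate random set $C\subseteq R$ of size $s=\Theta(c_0\delta^2qn/\log n)$, and split only $R\setminus C$ into $U,V$.
\item Since $D_C=sd/n\ge c_0K\delta^{-2}\log n$, ordinary hypergeometric concentration of $0/1$ indicators gives $d(v,C)\le 2D_C=O(c_0\delta^2qd/\log n)$ for every $v$ simultaneously. This is where the $(\log n)^2$ in the hypothesis is used.
\item $H[C]$ has all degrees $\Theta(D_C)$, so Lemma~\ref{lem:cycles} still yields $\Omega(|C|/\log n)\ge k$ disjoint short even cycles once $c\ll c_0$.
\item Whichever cycles you then use, $d(v,T)\le d(v,C)+d(v,A\cup B)\le 2D_C+c\delta^3qd/\log n$. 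This is the load condition of Lemma~\ref{lem:cut-dense-connector} with $\gamma=\Theta(\delta)$.
\item The connecting reservoir is $Q=U\cup V$, which is disjoint from all terminals. The graph $J=H[U,V]$ is cut-dense directly by Lemma~\ref{lemma: spectral cut density}, with no bipartite Cheeger analogue needed. Indeed $\lambda_2(J)=s_2(H[U,V])\le(1-\delta/2)D_0$, while the minimum degree of $J$ is at least $(1-\delta/100)D_0$, where $D_0=|U|d/n$.
\item Every terminal $x$ has $d(x,Q)\ge qd/2-2D_C\ge qd/3$.
\end{itemize}

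This is how the paper's proof proceeds.
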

\begin{proof}
 Let $c_0>0$ be a sufficiently small absolute constant, set $s:=\lfloor c_0\delta^2qn/\log n\rfloor$, and set $m:=(qn-s)/2$. We may generate a uniform $qn$-set $R$ by first choosing disjoint sets $C,U,V$ uniformly with $|C|=s$ and $|U|=|V|=m$, and then setting $R=C\cup U\cup V$. Write $D_0:=md/n$ and $D_C:=sd/n$. For $K$ sufficiently large, $D_0\ge qd/3$ and $D_C\ge c_0\delta^2qd/(2\log n)\ge C\log n$.

Apply Lemma~\ref{lem:main} to $(U,V)$ with $a=4$. Standard hypergeometric Chernoff bounds, followed by a union bound over all vertices, also give simultaneously $d_H(v,U),d_H(v,V)=(1\pm\delta/100)D_0$ and $d_H(v,C)=(1\pm1/10)D_C$ for every $v\in V(H)$. Thus, with probability $1-o(n^{-1})$,

$$
s_2(H[U,V])\le(1-\delta/2)D_0,\qquad
d_{H[U,V]}(x)=(1\pm\delta/100)D_0,
\qquad
d_H(v,C)\le 2D_C
$$

for all relevant vertices. We henceforth fix such a partition.

Let $J:=H[U,V]$, viewed as a bipartite graph on $Q$. Since the second largest adjacency eigenvalue of $J$ is $s_2(H[U,V])$, Lemma~\ref{lemma: spectral cut density} gives that $J$ is $\rho$-cut-dense for $\rho:=\delta d/(5n)$. Moreover, $\Delta(J)\le(1+\delta/100)D_0$, and therefore $\rho|Q|\ge \frac{\delta}{3}\Delta(J).$ In particular, Lemma~\ref{lem:cut-dense-connector} applies to $J$ with $\gamma=\delta/3$ provided the degree conditions hold.

Next, we embed the base cycles of the comparators. The graph $H[C]$ has minimum degree at least $D_C/2$ and maximum degree at most $2D_C$. By Lemma~\ref{lem:cycles}, $H[C]$ contains at least $c_1|C|\log(n)^{-1}$ vertex-disjoint even cycles, each of length at most $g:=C_1\log n$. Since
$k\le c\delta^3qn\log^{-2}n$ and $|C|=\Theta(\delta^2qn/\log n)$, choosing $c$ sufficiently small ensures that there are at least $k$ such cycles. A basic Hamilton router of order $k$ uses fewer than $k$ comparators, so we can assign a distinct cycle to each comparator and apply Lemma~\ref{lemma: comparators exist} to designate its four input/output vertices $(A,B)$ and pair its remaining vertices.

It remains to form all the required connections between the cycles, both to complete the even cycles into comparators and to join the comparators following the template of a basic Hamilton router. Form a collection $\mathcal D$ of pairs whose endpoints lie in $A\cup B\cup C$ as follows. Include all pairs used in the comparator constructions, and include one pair for every fixed connection joining the inputs/outputs of the comparators in the definition of a basic Hamilton router. Every vertex occurs only a bounded number of times in $\mathcal D$ (in fact the construction gives much less than $10$). Every endpoint $x$ of a pair in $\mathcal D$ has at least $qd/3$ neighbours in $Q$. Indeed, for $x\in A\cup B$ this follows from $d_H(x,R)\ge qd/2$ and $d_H(x,C)\le2D_C$, and for $x\in C$ it follows directly from the degree concentration into $U$ and $V$.

Let $T$ be the set of vertices occurring in $\mathcal D$. Since all cycle inputs/outputs lie in $C$, for every $v\in Q$ we have
$d_H(v,T)\le d_H(v,C)+d_H(v,A\cup B)\le 2D_C+c\delta^3qd/\log n$. By the choice of $s$, this is at most $c_2\delta^2qd/\log n$, where $c_2$ can be made arbitrarily small by choosing $c_0$ and then $c$ sufficiently small. As $\Delta(J)=\Theta(qd)$, this is precisely the condition required in Lemma~\ref{lem:cut-dense-connector} with $\beta$ an absolute constant and $\gamma=\Theta(\delta)$. The same lemma therefore provides internally vertex-disjoint paths through $Q$ joining every pair in $\mathcal D$.

Combining these paths with the base cycles gives the required comparators by Lemma~\ref{lemma: comparators exist}, and the remaining connecting paths give the fixed connections of a basic Hamilton router. Replacing those connections by subdivisions does not affect the proof of Proposition~\ref{prop:routers are good}, so their union is an $A,B$-Hamilton router $S$ with $A\cup B\subseteq V(S)\subseteq A\cup B\cup R$.
\end{proof}

\section{Checking Hall's condition between random sets after perturbation}






Below we give a convenient criterion for when a balanced bipartite graph contains a perfect matching. In the below, $H_0$ is a balanced bipartite graph, which, in our application, will be obtained by random sampling from a regular expander. We will show such an $H_0$ is nearly-regular and has cut-density (using Chernoff and Lemma~\ref{lem:main}), as assumed in the below statement. The cut-density parameter exceeds the error from near-regularity, so Hall's condition holds for $H_0$ with some slack. This slack is sufficient to say that $H_0$ still has a perfect matching even if a few more vertices are added each of which have large degree towards $H_0$, provided that each vertex of $H_0$ sends not too many edges towards the new vertices. 
\par The proof is a simple and standard counting argument, we give the details for the sake of completeness.

\begin{lemma}
\label{lem:perturbed-perfect-matching}
Let $H_0$ be a balanced bipartite graph with bipartition $(L_0,R_0)$, where $|L_0|=|R_0|=m$. Suppose that $H_0$ is $p$-cut-dense and
$d_{H_0}(v)=(1\pm\eta)D$ for every $v\in V(H_0)$. Set $\alpha:=pm/D$, and suppose that $0<\alpha\le 1/2$ and $\eta\le\alpha/10$.

Let $X,Y$ be disjoint sets with $|X|=|Y|$, and let $H$ be a bipartite graph with bipartition $(L_0\cup X,R_0\cup Y)$ containing $H_0$. Suppose that every $x\in X$ has at least $D/2$ neighbours in $R_0$, every $y\in Y$ has at least $D/2$ neighbours in $L_0$, and

$$
d_H(v,X)\le\frac{\alpha D}{20}\quad\text{for every }v\in R_0,
\qquad
d_H(v,Y)\le\frac{\alpha D}{20}\quad\text{for every }v\in L_0.
$$

Then $H$ contains a perfect matching.
\end{lemma}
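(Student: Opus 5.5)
We will verify Hall's condition in a symmetric form. Write $L:=L_0\cup X$ and $R:=R_0\cup Y$, so that $|L|=|R|=m+|X|$. The standard reduction applies: if Hall fails for some $S\subseteq L$, then $T:=R\setminus N_H(S)$ satisfies $N_H(T)\subseteq L\setminus S$ and $|T|>|L\setminus S|$. One of $S$ and $T$ then has size at most $|L|/2$. Hence it suffices to show $|N_H(S)|\ge|S|$ for every $S$ on either side with $|S|\le |L|/2$. By symmetry of the hypotheses we only treat $S\subseteq L$.

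First, $X$ and $Y$ are small. Double counting edges between $X$ and $R_0$ gives $|X|D/2\le m\cdot\alpha D/20$, so $|X|=|Y|\le \alpha m/10\le m/20$. Consequently any $S$ with $|S|\le|L|/2$ has $|S\cap L_0|\le (1/2+\alpha/20)m$.

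The main step is an expansion estimate inside $H_0$. We aim to show that every $S_0\subseteq L_0$ with $|S_0|\le(1/2+\alpha/20)m$ satisfies
$$|N_{H_0}(S_0)|\ge(1+\alpha/2)|S_0| .$$
We would prove this by applying $p$-cut-density to $W:=S_0\cup N_{H_0}(S_0)$, and we may assume $|N_{H_0}(S_0)|<(1+\alpha/2)|S_0|$.
\begin{itemize}
\item The only edges leaving $W$ go from $N_{H_0}(S_0)$ to $L_0\setminus S_0$.
\item By near-regularity, their number is at most $(1+\eta)D|N_{H_0}(S_0)|-(1-\eta)D|S_0|$.
\item Since $|W|$ is then at most about $(1+1/4)m$, we have $2m-|W|\ge m/2$. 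This lower bound is the step needing the most care; if it is too weak, we would split according to whether $|W|\le m$ and use the complement cut.
\item Cut-density gives at least $p|W|(2m-|W|)\gtrsim pm|S_0|=\alpha D|S_0|$ leaving edges, up to a constant factor.
\end{itemize}
Combining these bounds with $\eta\le\alpha/10$ yields $|N_{H_0}(S_0)|\ge(1+c\alpha)|S_0|$ for an absolute $c$. We expect the main obstacle to be tracking constants here so that $c$ is large enough (ideally $c=1/2$) for the case split below. If the constant comes out smaller, the thresholds in that split would be adjusted accordingly.

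Next we expand from $X$. For $S_1\subseteq X$, each $x\in S_1$ has at least $D/2$ neighbours in $R_0$, and each vertex of $R_0$ receives at most $\alpha D/20$ edges from $X$. Hence $|N_H(S_1)\cap R_0|\ge 10|S_1|/\alpha$.

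Finally we combine the two estimates. Write $S=S_0\cup S_1$ with $S_0\subseteq L_0$ and $S_1\subseteq X$. There are two cases.
\begin{itemize}
\item If $|S_1|\le \alpha|S_0|/2$, the $H_0$-expansion gives $|N_H(S)|\ge(1+\alpha/2)|S_0|\ge|S|$.
\item Otherwise $|S|<|S_1|(1+2/\alpha)$. Then $|N_H(S)|\ge 10|S_1|/\alpha\ge |S_1|(1+2/\alpha)>|S|$, using $\alpha\le1/2$.
\end{itemize}
Sets $S$ with $S_0=\varnothing$ fall into the second case trivially. Running the same argument for $S\subseteq R$, with the roles of $X$ and $Y$ swapped, completes Hall's condition, and Hall's theorem then gives the perfect matching.
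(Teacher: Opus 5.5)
Your proof is correct apart from one misstated step, and it organizes Hall's condition differently from the paper. The misstatement is in the reduction. From $|T|>|L\setminus S|$ you get $|S|+|T|>|L|$, so it is not true that one of $S,T$ has size at most $|L|/2$. The standard repair: if $|S|>|L|/2$, then any $T'\subseteq T$ with $|T'|=|L\setminus S|+1\le\lceil |L|/2\rceil$ still satisfies $N_H(T')\subseteq L\setminus S$, and so violates Hall. You therefore need to handle sets of size up to $\lceil |L|/2\rceil$. This costs nothing: your Case~2 uses no size bound, and the extra $\frac12$ is easily absorbed in Case~1. The step you flagged as delicate also goes through with room to spare. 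If $|S_0|\le(\frac12+\frac{\alpha}{20})m$ and $|N_{H_0}(S_0)|<(1+\frac{\alpha}{2})|S_0|$, then $|W|<(1+\frac{7\alpha}{20}+\frac{\alpha^2}{40})m<\frac65 m$, so $2m-|W|>\frac45 m$. Cut-density plus near-regularity then give $|N_{H_0}(S_0)|\ge\frac{1+\beta}{1-\beta}|S_0|\ge(1+2\beta)|S_0|$ with $\beta:=\frac45\alpha-\eta\ge\frac{7}{10}\alpha$. So $c=\frac75$, and no complement split is needed.

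The paper avoids the reduction to small sets altogether. It takes a Hall obstruction as a pair $S'\subseteq L_0\cup X$, $T'\subseteq R_0\cup Y$ with $e_H(S',T')=0$ and $|S'|+|T'|>m+|X|$. It first proves that any $S\subseteq L_0$, $T\subseteq R_0$ with $e_{H_0}(S,T)=0$ satisfy $m-|S|-|T|\ge\alpha\min\{|S|,|T|\}$, by applying cut-density to $Z=S\cup(R_0\setminus T)$. When $T=R_0\setminus N_{H_0}(S)$ this is exactly your $W$, but the normalisation $|S|\le|T|$ gives $|Z|\le m$ for free, so no size bookkeeping is required. Double counting then bounds $|S'\cap X|\le\frac{\alpha}{10}(m-|T|)$ and $|T'\cap Y|\le\frac{\alpha}{10}(m-|S|)$ simultaneously, and the deficiency inequality finishes the argument without a case split. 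The ingredients are thus the same as yours: cut-density on a set together with its neighbourhood, near-regularity, and double counting of edges from $X$ and $Y$. The paper's two-sided formulation is shorter and treats both sides of the bipartition at once. Your route needs the half-size reduction, the bound $|X|\le\alpha m/10$, and some constant tracking. In return it yields an explicit vertex-expansion statement for $H_0$ and is closer to the textbook Hall verification.
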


\begin{proof}
If $S\subseteq L_0$ and $T\subseteq R_0$ satisfy $e_{H_0}(S,T)=0$, then

$$
m-|S|-|T|\ge\alpha\min\{|S|,|T|\}.
$$

Indeed, write $s:=|S|$, $t:=|T|$, $h:=m-s-t$, and assume $s\le t$. For $Z:=S\cup(R_0\setminus T)$ we have $|Z|=2s+h\le m$. Hence cut-density and near-regularity give
$p m(2s+h)\le e_{H_0}(Z,V(H_0)\setminus Z)\le D(2\eta s+(1+\eta)h)$. Since $pm=\alpha D$ and $\eta\le\alpha/10$, this implies $h\ge\alpha s$, proving the claim.

Suppose now that $H$ has no perfect matching. By Hall's theorem there are sets $S'\subseteq L_0\cup X$ and $T'\subseteq R_0\cup Y$ with $e_H(S',T')=0$ and $|S'|+|T'|>m+|X|$. Write $S=S'\cap L_0$, $T=T'\cap R_0$, $x:=|S'\cap X|$, $y:=|T'\cap Y|$, and $h:=m-|S|-|T|$. Since $x,y\le |X|$, Hall's condition failing gives $h<\min\{x,y\}$.

Every vertex of $S'\cap X$ sends at least $D/2$ edges into $R_0\setminus T$, while every vertex there receives at most $\alpha D/20$ such edges. Thus $x\le(\alpha/10)(m-|T|)$, and similarly $y\le(\alpha/10)(m-|S|)$. Consequently, writing $u:=\min\{|S|,|T|\}$,
$h<\frac{\alpha}{10}(h+u)$. On the other hand, the claim above gives $h\ge\alpha u$. Hence
$h<\frac{\alpha+1}{10}h\le\frac{3}{20}h$, a contradiction.
\end{proof}

\section{Putting everything together}

\begin{proof}[Proof of Theorem~\ref{thm: main theorem}]
We first describe the proof at a high level. We first choose a small random set $R$, and partition the remainder randomly into equal-sized sets. We will sometimes call $R$ a \textit{reservoir} and the latter random sets \textit{layers}. We build a Hamilton router with terminal sets $A,B$ inside $A\cup B\cup R$, where $A$ and $B$ are layers, then redistribute every vertex left outside the router among the remaining layers and use Lemma~\ref{lem:perturbed-perfect-matching} to find perfect matchings between consecutive layers. The resulting $k$ disjoint $B$--$A$ paths from the union of perfect matchings cover everything outside the router, so Proposition~\ref{prop:basic} finishes the proof.

We suppress divisibility issues throughout to ease exposition. In particular, we assume that all set sizes below are integers and that $k$ divides $n-|V(S)|$ once the router $S$ has been found. These assumptions can be removed by starting the proof by finding a short path whose remainder is appropriately divisible, contracting the vertices of the short path, and essentially following the below proof verbatim. 

Choose sufficiently small absolute constants $0<c_1\ll c_0\ll1$, and put $q:=c_0\delta$ and $k:=c_1\delta^3qn/L^2$. We may assume $0<\delta\le1/2$, since the case of larger $\delta$ is only easier. Set $h:=2qk$ and $m:=k-h=(1-2q)k$, and let
$T:=\lfloor(n-qn-2k)/m\rfloor$. Note that $T=\Theta(n/k)$ and, since $qn\gg k$, we have $T\ge n/k$ for $n$ sufficiently large.

Choose uniformly at random a partition

$$
V(G)=R\sqcup A_0\sqcup A_1\sqcup B_0\sqcup B_1\sqcup C_1\sqcup\cdots\sqcup C_T\sqcup Z,
$$

where $|R|=qn$, $|A_0|=|B_0|=|C_i|=m$ and $|A_1|=|B_1|=h$. Set $A:=A_0\cup A_1$ and $B:=B_0\cup B_1$, so $|A|=|B|=k$.

\textbf{Part 1.} We first record several properties of this partition that each hold with high probability.

\par We start with the spectral properties of the various induced bipartite subgraphs. Set $D:=md/n$. Apply Lemma~\ref{lem:main} (with $p\to m/n$ and the same value of $D,\delta$), with a sufficiently large fixed value of $a$, to every pair $(C_i,C_{i+1})$, to $(B_0,C_1)$, and to $(C_i,A_0)$ for every $i\le T$. Each such pair is genuinely a uniformly random ordered pair of disjoint $m$-sets of the original graph, and the hypotheses $p\le\delta/100$ and $D\ge C_a\delta^{-2}\log n$ hold because $m=(1-2q)k$, $q=c_0\delta$, and $k=c_1\delta^3qn/(\log n)^2$, so, for $c_0,c_1$ sufficiently small and $n$ sufficiently large,
$$
\frac{m}{n}\le \frac{k}{n}=c_0c_1\frac{\delta^4}{(\log n)^2}\le \frac{\delta}{100},
\qquad
D\ge \frac{c_0c_1}{2}\frac{\delta^4d}{(\log n)^2}\ge C_a\delta^{-2}\log n,
$$
where the last inequality follows from $d\ge C\delta^{-6}(\log n)^3$ once the absolute constant $C$ is chosen sufficiently large. 

The conclusion of Lemma~\ref{lem:main} ensures that in each application, the corresponding random layers form a bipartite graph with good spectral gap, i.e. the corresponding biadjacency matrix $K$ has $s_2(K)\leq (1-\delta/2)D$ (meaning the adjacency matrix $A$ of the bipartite graph has second largest eigenvalue at most $(1-\delta/2)D$), with probability $1-n^{-a}$. As $T=O(\delta^{-4}\log(n)^2)$, we may take a union bound to conclude with high probability, this conclusion holds for all pairs of random sets.
\par We now record the concentration of degrees. Set $\eta:=\delta/100$. Standard hypergeometric concentration gives that with high probability 
$d_G(v,X)=(1\pm\eta)D$ holds for every vertex $v$ and every layer $X\in{A_0,B_0,C_1,\ldots,C_T}$. We can similarly assert, with high probability, the corresponding degree concentration of each vertex into $\{R,A_0,A_1,B_0,B_1\}$ and any other random set that is obtained by unions and complements from this set, such as $A$ and $B$.

Consequently, for every relevant pair of random layers, the balanced bipartite graph between them has all degrees $(1\pm\eta)D$ and second adjacency eigenvalue at most $(1-\delta/2)D$. Lemma~\ref{lemma: spectral cut density} therefore shows that each such pair is $p_*$-cut-dense, where we may safely take $p_*:=\frac{\delta d}{10n}$.

Finally, by Lemma~\ref{lem:random-router}, with probability at least $1-1/n$ the random set $R$ has the required Hamilton router property simultaneously for all admissible $A,B$. 

\textbf{Part 2.} By the probabilistic method, we may fix a partition that satisfies all of the properties above. The rest of the proof uses these properties deterministically to conclude the existence of a Hamilton cycle.

Let us start by considering the reservoir $R$, in which we wish to embed a Hamilton router. By the degree bounds, the sets $A$ and $B$ satisfy the hypotheses from Lemma~\ref{lem:random-router}. Indeed, every $x\in A\cup B$ has $d_G(x,R)\ge qd/2$, while every $v\in R$ has
$d_G(v,A\cup B)\le 4kd/n\le c\delta^3qd/L$, provided $c_1$ was chosen sufficiently small. Since $k\le c\delta^3|R|\log\log n/\log(n)^2$, the property from Lemma~\ref{lem:random-router} gives an $A,B$-Hamilton router
$S\subseteq G[A\cup B\cup R]$.

\par Next, we wish to partition the vertices outside the router $S$, and combine them with the layers $C_1,\ldots, C_T$, to form various balanced bipartite graphs between which we intend to find perfect matchings. We will end up discarding a few layers $C_i$ for convenience, and redistribute. Set $t:=(n-|V(S)|)/k$. Since $S\subseteq A\cup B\cup R$, we have $t\le n/k\le T$. Let
$P:=V(G)\setminus\bigl(V(S)\cup C_1\cup\cdots\cup C_t\bigr)$. By the divisibility hypothesis and the definition of $m$, $|P|=t(k-m)=th$. Partition $P$ uniformly at random into sets $X_1,\ldots,X_t$ of size $h$, and set $L_i:=C_i\cup X_i$. Thus every $L_i$ has size $k$ and
$V(G)\setminus V(S)=L_1\sqcup\cdots\sqcup L_t$.

\par The $X_i$s are random sets at the moment, and we wish to show that with high probability, they are `admissible' with respect to the deterministic sets $C_i$ from the perspective of the hypotheses of Lemma~\ref{lem:perturbed-perfect-matching}. Set $\alpha:=\frac{p_*m}{D}=\frac{\delta}{10}$, noting $\eta\le\alpha/10$, exactly as required in Lemma~\ref{lem:perturbed-perfect-matching}. Our initial concentration event gives $d_G(v,P)=O(qd)$ for every $v$. The sets $X_i$ form a uniform equipartition, so another hypergeometric concentration estimate gives, simultaneously for all $v$ and $i$,
$d_G(v,X_i)\le 10qD$, with high probability. Since $q=c_0\delta$ and $c_0$ is sufficiently small, this is at most $\alpha D/20$. Similarly, the earlier concentration bounds give $d_G(v,A_1),d_G(v,B_1)\le\alpha D/20$ for every vertex. On the other hand, every vertex of every $X_i$, and every vertex of $A_1\cup B_1$, has at least $(1-\eta)D\ge D/2$ neighbours in each layer $C_i$.
\par By the probabilistic method, we may fix the $X_i$ to have the degree conditions required in the above paragraph. We can therefore apply Lemma~\ref{lem:perturbed-perfect-matching} successively, first with $H_0\to G[B_0,C_1]$ and $X,Y\to B_1,X_1$, and later applications with $H_0\to G[C_i,C_{i+1}]$ and $X,Y\to X_i,X_{i+1}$, and a final time with $H_0\to G[C_t,A_0]$ and $X,Y\to X_t,A_1$. In every case the hypotheses of Lemma~\ref{lem:perturbed-perfect-matching} hold, so there are perfect matchings
between
$B$ and $L_1$, between $L_i$ and $L_{i+1}$ for every $i<t$, and between $L_t$ and $A$.

The union of these matchings is a collection $\mathcal P$ of exactly $k$ vertex-disjoint paths, each starting in $B$ and ending in $A$. Since every intermediate matching is perfect, the internal vertices of these paths are exactly
$L_1\cup\cdots\cup L_t=V(G)\setminus V(S)$. Thus $\mathcal P$ is precisely the spanning path forest required by Proposition~\ref{prop:basic}. Combining $\mathcal P$ with the $A,B$-Hamilton router $S$ gives a Hamilton cycle in $G$. \end{proof}

\bibliographystyle{abbrv}
\bibliography{rbs}


\end{document}

%% file: guff.tex
\newcounter{capitalcounter}

\newcounter{claimcounter}

\newtheorem{lemma}{Lemma}[section]
\newtheorem{corollary}[lemma]{Corollary}
\newtheorem{theorem}[lemma]{Theorem}

\newtheorem{prop}[lemma]{Proposition}
\newtheorem{conjecture}[lemma]{Conjecture}

\theoremstyle{definition}
\newtheorem{defn}[lemma]{Definition}

\newtheorem{claim}{Claim}

\theoremstyle{remark}

\theoremstyle{plain}

\providecommand{\R}{\mathbb{R}}

\providecommand{\E}{\mathbb{E}}
\providecommand{\PP}{\mathbb{P}}

\providecommand{\rank}{\operatorname{rank}}

\providecommand{\norm}[1]{\left\lVert #1\right\rVert}

\allowdisplaybreaks[1]

%% file: rbs.bib
@incollection{HaggkvistThomason1997,
  author    = {H{\"a}ggkvist, Roland and Thomason, Andrew},
  title     = {Oriented {Hamilton} Cycles in Oriented Graphs},
  booktitle = {Combinatorics, Geometry and Probability: A Tribute to Paul Erd{\H{o}}s},
  editor    = {Bollob{\'a}s, B{\'e}la and Thomason, Andrew},
  publisher = {Cambridge University Press},
  address   = {Cambridge},
  year      = {1997},
  pages     = {339--354},
  doi       = {10.1017/CBO9780511662034.032}
}

@article{AlonHooryLinial,
  author  = {Alon, Noga and Hoory, Shlomo and Linial, Nathan},
  title   = {The {M}oore Bound for Irregular Graphs},
  journal = {Graphs and Combinatorics},
  volume  = {18},
  pages   = {53--57},
  year    = {2002},
  doi     = {10.1007/s003730200002}
}

@article{Merino2025Kneser,
  title   = {Kneser graphs are Hamiltonian},
  author  = {Merino, Arturo and M{\"u}tze, Torsten and Namrata},
  journal = {Advances in Mathematics},
  volume  = {468},
  pages   = {110189},
  year    = {2025},
  doi     = {10.1016/j.aim.2025.110189}
}

@inproceedings{AssadiKapralovYu2023,
  author    = {Sepehr Assadi and Michael Kapralov and Huacheng Yu},
  title     = {On Constructing Spanners from Random Gaussian Projections},
  booktitle = {Approximation, Randomization, and Combinatorial Optimization.
               Algorithms and Techniques (APPROX/RANDOM 2023)},
  series    = {Leibniz International Proceedings in Informatics (LIPIcs)},
  volume    = {275},
  pages     = {57:1--57:18},
  year      = {2023},
  publisher = {Schloss Dagstuhl -- Leibniz-Zentrum f{\"u}r Informatik},
  doi       = {10.4230/LIPIcs.APPROX/RANDOM.2023.57},
  url       = {https://doi.org/10.4230/LIPIcs.APPROX/RANDOM.2023.57}
}

@article{HooryLinialWigderson2006,
  author  = {Shlomo Hoory and Nathan Linial and Avi Wigderson},
  title   = {Expander Graphs and Their Applications},
  journal = {Bulletin of the American Mathematical Society},
  volume  = {43},
  number  = {4},
  pages   = {439--561},
  year    = {2006},
  doi     = {10.1090/S0273-0979-06-01126-8},
  url     = {https://www.cs.huji.ac.il/~nati/PAPERS/expander_survey.pdf}
}

@article{Tropp2012,
  author  = {Tropp, Joel A.},
  title   = {User-Friendly Tail Bounds for Sums of Random Matrices},
  journal = {Foundations of Computational Mathematics},
  volume  = {12},
  number  = {4},
  pages   = {389--434},
  year    = {2012},
  doi     = {10.1007/s10208-011-9099-z}
}

@article{Haxell1995,
  author  = {Penny E. Haxell},
  title   = {A Condition for Matchability in Hypergraphs},
  journal = {Graphs and Combinatorics},
  volume  = {11},
  number  = {3},
  pages   = {245--248},
  year    = {1995},
  doi     = {10.1007/BF01793010}
}

@inproceedings{AsadpourFeigeSaberi2008,
  author    = {Arash Asadpour and Uriel Feige and Amin Saberi},
  title     = {Santa Claus Meets Hypergraph Matchings},
  booktitle = {Approximation, Randomization and Combinatorial Optimization.
               Algorithms and Techniques},
  series    = {Lecture Notes in Computer Science},
  volume    = {5171},
  pages     = {10--20},
  publisher = {Springer},
  year      = {2008},
  doi       = {10.1007/978-3-540-85363-3_2}
}

@article{bowtell2026proof,
  title={A proof of {A}ndersen's rainbow path conjecture for large $ n$},
  author={Bowtell, Candida and Montgomery, Richard and M{\"u}yesser, Alp and Pokrovskiy, Alexey},
  journal={arXiv preprint arXiv:2608.06369},
  year={2026}
}

@book{BrouwerHaemers2012,
  author    = {Brouwer, Andries E. and Haemers, Willem H.},
  title     = {Spectra of Graphs},
  series    = {Universitext},
  publisher = {Springer},
  year      = {2012},
  doi       = {10.1007/978-1-4614-1939-6}
}

@article{CJMM,
  author  = {Chakraborti, Debsoumya and Janzer, Oliver
             and Methuku, Abhishek and Montgomery, Richard},
  title   = {Edge-disjoint cycles with the same vertex set},
  journal = {Advances in Mathematics},
  volume  = {469},
  pages   = {110228},
  year    = {2025},
  doi     = {10.1016/j.aim.2025.110228}
}

@article{LetzterMethukuSudakov2026,
  author  = {Letzter, Shoham and Methuku, Abhishek and Sudakov, Benny},
  title   = {Nearly {H}amilton cycles in sublinear expanders and applications},
  journal = {Journal of the London Mathematical Society},
  volume  = {113},
  number  = {2},
  pages   = {e70452},
  year    = {2026},
  doi     = {10.1112/jlms.70452}
}

@inproceedings{BucicHeHuangSaranurak2026,
  author    = {Buci{\'c}, Matija and He, Zhongtian and Huang, Shang-En and Saranurak, Thatchaphol},
  title     = {Disjoint Paths in Expanders in Deterministic Almost-Linear Time via Hypergraph Perfect Matching},
  booktitle = {Proceedings of the 2026 Annual ACM-SIAM Symposium on Discrete Algorithms (SODA)},
  pages     = {2316--2336},
  year      = {2026},
  doi       = {10.1137/1.9781611978971.83}
}

@article{HaggkvistThomason1995,
  author  = {H{\"a}ggkvist, Roland and Thomason, Andrew},
  title   = {Oriented {Hamilton} Cycles in Digraphs},
  journal = {Journal of Graph Theory},
  volume  = {19},
  number  = {4},
  pages   = {471--479},
  year    = {1995},
  doi     = {10.1002/jgt.3190190404}
}

@inproceedings{montgomery2026recent,
  title={Recent Progress in Graph Theory Using Expansion},
  author={Montgomery, Richard},
  booktitle={International Congress of Mathematicians 2026},
  pages={179--198},
  year={2026},
  organization={SIAM}
}

@article{bradavc2026hamiltonicity,
  title={Hamiltonicity of regular sublinear expanders},
  author={Brada{\v{c}}, Domagoj and Janzer, Oliver},
  journal={arXiv preprint arXiv:2605.15043},
  year={2026}
}

@article{krivelevich2003sparse,
  title={Sparse pseudo-random graphs are {H}amiltonian},
  author={Krivelevich, Michael and Sudakov, Benny},
  journal={Journal of Graph Theory},
  volume={42},
  number={1},
  pages={17--33},
  year={2003},
  publisher={Wiley Online Library}
}

@incollection{krivelevich2006pseudo,
  title={Pseudo-random graphs},
  author={Krivelevich, Michael and Sudakov, Benny},
  booktitle={More sets, graphs and numbers: A Salute to Vera Sos and Andr{\'a}s Hajnal},
  pages={199--262},
  year={2006},
  publisher={Springer}
}

@inproceedings{ChenKhannaLi22,
  author    = {Yu Chen and Sanjeev Khanna and Huan Li},
  title     = {On Weighted Graph Sparsification by Linear Sketching},
  booktitle = {63rd IEEE Annual Symposium on Foundations of Computer Science
               (FOCS 2022)},
  pages     = {474--485},
  publisher = {IEEE},
  year      = {2022},
  doi       = {10.1109/FOCS54457.2022.00052}
}

@article{ferber2025hamiltonicity,
  title={Hamiltonicity of sparse pseudorandom graphs},
  author={Ferber, Asaf and Han, Jie and Mao, Dingjia and Vershynin, Roman},
  journal={Combinatorics, Probability and Computing},
  volume={34},
  number={4},
  pages={596--620},
  year={2025},
  publisher={Cambridge University Press}
}

@article{hyde2025spanning,
  title={Spanning trees in pseudorandom graphs via sorting networks},
  author={Hyde, Joseph and Morrison, Natasha and M{\"u}yesser, Alp and Pavez-Sign{\'e}, Mat{\'\i}as},
  journal={Proceedings of the American Mathematical Society},
  volume={153},
  number={06},
  pages={2353--2367},
  year={2025}
}

@article{draganic2024hamiltonicity,
  title={Hamiltonicity of expanders: optimal bounds and applications},
  author={Dragani{\'c}, Nemanja and Montgomery, Richard and Correia, David Munh{\'a} and Pokrovskiy, Alexey and Sudakov, Benny},
  journal={arXiv preprint arXiv:2402.06603},
  year={2024}
}
